\newtheorem{thm}{Theorem}[section]
\newtheorem{lemma}[thm]{Lemma}
\newcommand{\ICG}{\mathrm{ICG}}
\newenvironment{proof} {\par \noindent \textbf{Proof: }}{\QED \par \bigskip }
\newcommand{\QED}{\hfill$\square$}
\newcommand{\N}{{\mathbb N}}
\title {
\bf{New results on the energy of integral circulant graphs} }
\author {
{\large Aleksandar Ili\' c}  \\
{\em Faculty of Sciences and Mathematics, Vi\v segradska 33, 18000 Ni\v s, Serbia} \\
{e-mail: \texttt{aleksandari@gmail.com} } \and
{\large Milan Ba\v si\' c} \\
{\em Faculty of Sciences and Mathematics, Vi\v segradska 33, 18000 Ni\v s, Serbia} \\
{e-mail: \texttt{basic\_milan@yahoo.com} }}
\begin{document}

\maketitle

\begin{abstract}
Circulant graphs are an important class of interconnection networks
in parallel and distributed computing. Integral circulant graphs
play an important role in modeling quantum spin networks supporting
the perfect state transfer as well. The integral circulant graph
$\ICG_n (D)$ has the vertex set $Z_n = \{0, 1, 2, \ldots, n - 1\}$
and vertices $a$ and $b$ are adjacent if $\gcd(a-b,n)\in D$, where
$D \subseteq \{d : d \mid n,\ 1\leq d<n\}$. These graphs are highly
symmetric, have integral spectra and some remarkable properties
connecting chemical graph theory and number theory. The energy of a
graph was first defined by Gutman, as the sum of the absolute values
of the eigenvalues of the adjacency matrix. Recently, there was a
vast research for the pairs and families of non-cospectral graphs
having equal energies. Following [R. B. Bapat, S. Pati,
\textit{Energy of a graph is never an odd integer}, Bull. Kerala
Math. Assoc. 1 (2004) 129--132.], we characterize the energy of
integral circulant graph modulo 4. Furthermore, we establish some
general closed form expressions for the energy of integral circulant
graphs and generalize some results from [A. Ili\' c, \textit{The
energy of unitary Cayley graphs}, Linear Algebra Appl. 431 (2009),
1881--1889.]. We close the paper by proposing some open problems and
characterizing extremal graphs with minimal energy among integral
circulant graphs with $n$ vertices, provided $n$ is even.
\end{abstract}

{\bf Key words}: integral circulant graphs; graph energy; eigenvalues; cospectral graphs. \vskip
0.1cm

{{\bf AMS Classifications:} 05C50.} \vskip 0.1cm

\section{Introduction}

Circulant graphs are Cayley graphs over a cyclic group. The interest of circulant graphs in graph
theory and applications has grown during the last two decades, they appeared in coding theory, VLSI
design, Ramsey theory and other areas. Recently there is vast research on the interconnection
schemes based on circulant topology -- circulant graphs represent an important class of
interconnection networks in parallel and distributed computing (see \cite{hwang03}). Integral
circulant graphs are also highly symmetric and have some remarkable properties between connecting
graph theory and number theory.

In quantum communication scenario, circulant graphs is used 
in the problem of arranging $N$ interacting qubits in a quantum spin
network based on a circulant topology
 to obtain good communication between them. In general,
quantum spin system can be defined as a collection of qubits on a
graph, whose dynamics is governed by a suitable Hamiltonian, without
external control on the system.
Different classes of graphs were examined for the purpose of perfect
transferring the states of the systems. Since circulant graphs are
mirror symmetric, they represent good candidates for the property of
periodicity and thus integrality \cite{fizicarski1}, which further
implies that
integral circulant graphs would be potential candidates for
modeling the quantum spin networks that permit perfect state
transfer \cite{ahmadi,sandiego,sandiego1,Go08,SaSeSh07}. These
properties are primarily related to the spectra of these graphs.
Indeed, the eigenvalues of the graphs are indexed in palindromic
order ($\lambda_i=\lambda_{n-i}$) and can be represented by
Ramanujan's sums.

Ba\v si\'c \cite{stevanovic08a,Ba10} established a condition under
which integral circulant graphs have perfect state transfer and gave
complete characterization these graphs. It turned out that the
degree of $2$ must be equal in a prime factorization of the
difference of successive eigenvalues. Furthermore, exactly one of
the divisors $n/4$ or $n/2$ have to belong to the divisor set $D$
for any integral circulant graph $\ICG_n(D)$ having perfect state
transfer. In this paper we continue with studying parameters of
integral circulant graphs like energy, having in mind application in
chemical graph theory. We actually  focus on characterization of the
energy of integral circulant graphs $\ICG_n(D)$  modulo $4$, where
the divisor $n/2$ and eigenvalue $\lambda_{n/2}$ play important
role. During this task, some interesting properties of the
eigenvalues modulo $2$ are also used.

Saxena, Severini and Shraplinski \cite{SaSeSh07} studied some parameters of integral circulant
graphs as the bounds for the number of vertices and the diameter, bipartiteness and perfect state
transfer. The present authors in \cite{BaIl09,IlBa09} calculated the clique and chromatic number of
integral circulant graphs with exactly one and two divisors, and also disproved posed conjecture
that the order of $\ICG_n (D)$ is divisible by the clique number. Klotz and Sander \cite{klotz07}
determined the diameter, clique number, chromatic number and eigenvalues of the unitary Cayley
graphs. The latter group of authors proposed a generalization of unitary Cayley graphs named {\it
gcd-graphs} and proved that they have to be integral.

Let $A$ be the adjacency matrix of a simple graph $G$, and
$\lambda_1, \lambda_2, \ldots, \lambda_n$ be the eigenvalues of the
graph $G$. The energy of $G$ is defined as the sum of absolute
values of its eigenvalues \cite{Gu78,Gu01,E}
$$
E(G) = \sum_{i = 1}^n |\lambda_i|.
$$

The concept of graph energy arose in chemistry where certain numerical quantities, such as the heat
of formation of a hydrocarbon, are related to total $\pi$-electron energy that can be calculated as
the energy of an appropriate molecular graph.

The graph $G$ is said to be hyperenergetic if its energy exceeds the
energy of the complete graph $K_n$, or equivalently if $E (G) > 2n -
2$. This concept was introduced first by Gutman and afterwards has
been studied intensively in the literature
\cite{AkMoZa09,BlSh08,Gu99,So09}. Hyperenergetic graphs are
important because molecular graphs with maximum energy pertain to
maximality stable $\pi$-electron systems. In \cite{Il09} and
\cite{RaVe09}, the authors calculated the energy of unitary Cayley
graphs and complement of unitary Cayley graphs, and establish the
necessary and sufficient conditions for $\ICG_n$ to be
hyperenergetic. There was a vast research for the pairs and families
of non-cospectral graphs having equal energy
\cite{BoViAb08,BrStGu04,IlBaGu10,Il10,C,B,RaWa07,A}.

In 2004 Bapat and Pati \cite{BaPa04} proved an interesting simple
result -- the energy of a graph cannot be an odd integer. Pirzada
and Gutman \cite{PiGu08} generalized this result and proved the
following
\begin{thm}
Let $r$ and $s$ be integers such that $r \geq 1$ and $0 \leq s \leq r - 1$. Let $q$ be an odd
integer. Then $E (G)$ cannot be of the form $(2^s q)^{1/r}$.
\end{thm}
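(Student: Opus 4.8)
The plan is to reduce the statement to the single arithmetic fact that $\frac12 E(G)$ is an algebraic integer, and then to exploit the hypothesis $0\le s\le r-1$ through an elementary divisibility argument. First I would recall that the adjacency matrix $A$ of a simple graph is a symmetric integer matrix, so its characteristic polynomial is monic with integer coefficients; hence every eigenvalue $\lambda_i$ is an algebraic integer. Since $G$ is simple, $A$ has zero diagonal, so $\sum_{i=1}^n\lambda_i=\operatorname{tr}(A)=0$. Splitting the eigenvalues according to sign and discarding the zero eigenvalues, this yields $\sum_{\lambda_i>0}\lambda_i=-\sum_{\lambda_i<0}\lambda_i$, and therefore $E(G)=\sum_{i=1}^n|\lambda_i|=2\sum_{\lambda_i>0}\lambda_i$. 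Because the algebraic integers form a ring, the finite sum $\alpha:=\frac12 E(G)=\sum_{\lambda_i>0}\lambda_i$ is itself an algebraic integer.

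Next I would assume, for contradiction, that $E(G)=(2^sq)^{1/r}$ with $r\ge1$, $0\le s\le r-1$ and $q$ an odd integer. Since $E(G)\ge0$ this forces $2^sq\ge0$, hence $q>0$, and means $E(G)^r=2^sq$. Raising $\alpha$ to the $r$-th power gives $\alpha^r=\bigl(\tfrac12 E(G)\bigr)^r=2^{-r}E(G)^r=2^{s-r}q=\frac{q}{2^{\,r-s}}$. The left-hand side is an algebraic integer, being a power of the algebraic integer $\alpha$, whereas the right-hand side is a rational number; a rational number that is an algebraic integer must lie in $\mathbb{Z}$. But $r-s\ge1$ and $q$ is odd, so $q/2^{\,r-s}\notin\mathbb{Z}$, a contradiction. (The degenerate case $E(G)=0$ is excluded since $2^sq\ne0$.)

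The only genuine content, and the step I expect to be the crux, is establishing that $\frac12 E(G)$ — rather than merely $E(G)$ — is an algebraic integer; this is exactly where the trace-zero property of a simple graph enters, and everything after it is bookkeeping with $2$-adic valuations, the two-sided bound $0\le s\le r-1$ being used precisely to guarantee $1\le r-s$. Specializing to $r=1$ (which forces $s=0$) recovers the Bapat--Pati theorem that $E(G)$ is never an odd integer, so the argument above is really a mild strengthening of theirs packaged to handle radical expressions as well.
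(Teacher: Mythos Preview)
Your argument is correct. Note, however, that the paper does not supply its own proof of this theorem: it is quoted verbatim from Pirzada and Gutman \cite{PiGu08} as a background result, with no proof given in the present paper. So there is nothing to compare your proposal against here.

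For what it is worth, your proof is essentially the one given in \cite{PiGu08}: the crux there, as in your write-up, is the Bapat--Pati observation that $\tfrac12 E(G)=\sum_{\lambda_i>0}\lambda_i$ is an algebraic integer (from $\operatorname{tr}A=0$), after which one raises to the $r$-th power and uses that a rational algebraic integer lies in $\mathbb{Z}$ to force $2^{\,r-s}\mid q$, contradicting $r-s\ge 1$ and $q$ odd. Your handling of the degenerate case $E(G)=0$ and the sign of $q$ is a nice bit of tidiness.
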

For more information about the closed forms of the graph energy we
refer the reader to \cite{D}.

\smallskip

In this paper we go a step further and characterize the energy of integral circulant graph
modulo~4.

The paper is organized as follows. In Section 2 we give some
preliminary results regarding eigenvalues of integral circulant
graphs. In Section 3 we characterize the energy of integral
circulant graph modulo 4, while in Section 4 we generalized formulas
for the energy of integral
circulant graphs from \cite{Il09}. 
In Section 5, some larger families of graphs with equal energy are
presented and further we support conjecture proposed by So
\cite{so06}, that two graphs $\ICG_n(D_1)$ and $\ICG_n(D_2)$ are
cospectral if and only if $D_1=D_2$. In concluding remarks we
propose some open problems and characterize extremal graphs with
minimal energy among integral circulant graphs with $n$ vertices,
provided $n$ is even.

\section{Preliminaries}

Let us recall that for a positive integer $n$ and subset $S
\subseteq \{0, 1, 2, \ldots, n - 1\}$, the circulant graph $G(n, S)$
is the graph with $n$ vertices, labeled with integers modulo $n$,
such that each vertex $i$ is adjacent to $|S|$ other vertices $\{ i
+ s \pmod n \ | \ s \in S\}$. The set $S$ is called a symbol of
$G(n, S)$. As we will consider only undirected graphs without loops,
we assume that $0\not\in S$ and, $s \in S$ if and only if $n - s \in
S$, and therefore the vertex $i$ is adjacent to vertices $i \pm s
\pmod n$ for each $s \in S$.

Recently, So \cite{so06} has characterized circulant graphs with
integral eigenvalues-integral circulant graphs. Let
$$ G_n(d) = \{ k\ | \ \gcd(k, n)=d, \ 1 \leq k < n \} $$
be the set of all positive integers less than $n$ having the same
greatest common divisor $d$ with $n$. Let $D_n$ be the set of
positive divisors $d$ of $n$, with $d \leq \frac{n}{2}$.

\begin{thm}
A circulant graph $G(n,S)$ is integral if and only if
$$ S = \bigcup_{d \in D} G_n(d) $$
for some set of divisors $D \subseteq D_n$. \label{so}
\end{thm}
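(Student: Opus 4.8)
The plan is to write the spectrum of $G(n,S)$ explicitly in terms of $n$-th roots of unity and then exploit two standard tools: the Galois action of $(\mathbb{Z}/n\mathbb{Z})^{*}$ on the cyclotomic field $\mathbb{Q}(e^{2\pi i/n})$, and discrete Fourier inversion on $\mathbb{Z}_n$. Together these let one pass a symmetry of the symbol $S$ to a symmetry of the eigenvalue list and back again. Put $\omega=e^{2\pi i/n}$. The adjacency matrix of $G(n,S)$ is $\sum_{s\in S}P^{s}$, where $P$ is the basic cyclic permutation matrix, so its eigenvalues are
$$\lambda_{j}=\sum_{s\in S}\omega^{js},\qquad j=0,1,\dots,n-1 .$$
Both directions will rest on the orbit identity $\{\,aj \bmod n : \gcd(a,n)=1\,\}=G_{n}(\gcd(j,n))$; one inclusion is immediate, and the reverse one amounts to lifting a unit modulo $n/\gcd(j,n)$ to a unit modulo $n$, which the Chinese Remainder Theorem supplies.

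For the ``if'' direction, suppose $S=\bigcup_{d\in D}G_{n}(d)$ with $D\subseteq D_{n}$. Grouping the terms of $\lambda_{j}$ by the value $d=\gcd(s,n)$ and writing $s=d\ell$ with $1\le\ell<n/d$, $\gcd(\ell,n/d)=1$ -- a bijection of $G_{n}(d)$ onto the reduced residues mod $n/d$ -- the block indexed by $d$ contributes $\sum_{\gcd(\ell,n/d)=1}(\omega^{d})^{j\ell}$, which is the Ramanujan sum for modulus $n/d$, since $\omega^{d}$ is a primitive $(n/d)$-th root of unity. Ramanujan sums are fixed by every element of $\mathrm{Gal}(\mathbb{Q}(\omega^{d})/\mathbb{Q})$, hence rational, and being algebraic integers they lie in $\mathbb{Z}$; thus $\lambda_{j}\in\mathbb{Z}$ for every $j$ and $G(n,S)$ is integral.

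For the converse, assume every $\lambda_{j}\in\mathbb{Z}$. For $a$ with $\gcd(a,n)=1$ the automorphism $\sigma_{a}\colon\omega\mapsto\omega^{a}$ sends $\lambda_{j}$ to $\sum_{s\in S}\omega^{ajs}=\lambda_{aj}$; since $\lambda_{j}$ is rational it is fixed, so $\lambda_{aj}=\lambda_{j}$ whenever $\gcd(a,n)=1$, and by the orbit identity $j\mapsto\lambda_{j}$ is constant on each class $G_{n}(d)$. Now invert: by Fourier inversion the indicator function of $S$ satisfies $\chi_{S}(t)=\frac1n\sum_{j}\lambda_{j}\,\omega^{-jt}$, and replacing the summation index $j$ by $a^{-1}j$ (a bijection of $\mathbb{Z}_n$; note $(a^{-1}j)(at)\equiv jt\pmod n$) together with $\lambda_{a^{-1}j}=\lambda_{j}$ gives $\chi_{S}(at)=\chi_{S}(t)$ for all $a$ coprime to $n$. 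Hence $\chi_{S}$ is constant on each orbit $G_{n}(\gcd(t,n))$, so $S$ is a union of such classes; since $0\notin S$ only divisors $d<n$ occur, and these are exactly the elements of $D_{n}$, giving $S=\bigcup_{d\in D}G_{n}(d)$ for some $D\subseteq D_{n}$.

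I expect no real obstacle here. The conceptual core is the converse direction's symmetry transfer, namely the chain ``$\lambda$ integral $\Rightarrow$ $\lambda_{j}$ depends only on $\gcd(j,n)$ $\Rightarrow$ $\chi_{S}$ depends only on $\gcd(s,n)$'', carried out via Galois invariance and Fourier inversion. The only slightly delicate ingredient, needed on both sides, is the orbit identity, whose nontrivial inclusion rests on the Chinese Remainder lifting mentioned above; one should also confirm that the two reindexings used ($s=d\ell$ and $j\mapsto a^{-1}j$) are genuine bijections, which they are.
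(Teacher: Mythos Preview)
The paper does not supply its own proof of this theorem: it is quoted verbatim as the characterization due to So~\cite{so06} and used thereafter as a black box, so there is nothing in the paper to compare your argument against.

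That said, your proof is correct and self-contained. The ``if'' direction reduces each eigenvalue to a sum of Ramanujan sums, which is exactly the identity the paper later records as~(\ref{eq:eigenvalues}); your Galois-invariance justification for integrality is the standard one. The ``only if'' direction is where the content lies, and your chain (integrality $\Rightarrow$ $\lambda_j$ constant on $(\mathbb{Z}/n\mathbb{Z})^{*}$-orbits via $\sigma_a(\lambda_j)=\lambda_{aj}$ $\Rightarrow$ $\chi_S$ constant on the same orbits via Fourier inversion) is clean and complete. Two small remarks: first, the orbit identity's nontrivial inclusion is precisely the surjectivity of the reduction map $(\mathbb{Z}/n\mathbb{Z})^{*}\to(\mathbb{Z}/(n/d)\mathbb{Z})^{*}$, which you correctly attribute to the Chinese Remainder Theorem; second, your final sentence ``these are exactly the elements of $D_n$'' is right because any proper divisor $d$ of $n$ satisfies $d\le n/2$, so no divisor is lost by restricting to $D_n$.
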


We denote them by $\ICG_n(D)$ and in some recent papers integral
circulant graphs are also known as gcd-graphs
(\cite{BaIl09,klotz07}).

Let $\Gamma$ be a multiplicative group with identity $e$. For
$S\subset \Gamma$, $e\not\in S$ and $S^{-1} = \{s^{-1}\ |\ s\in
S\}=S$, the Cayley graph $X = Cay(\Gamma,S)$ is the undirected
graph having vertex set $V(X)=\Gamma$ and edge set $E(X) =
\{\{a,b\}\ |\ ab^{-1}\in S\}$. For a positive integer $n > 1$ the
unitary Cayley graph $X_n = Cay(Z_n, \ U_n)$ is defined by the
additive group of the ring $Z_n$ of integers modulo $n$ and the
multiplicative group $U_n = Z_n^{*}$ of its invertible elements.


 By Theorem
\ref{so} we obtain that integral circulant graphs are Cayley
graphs of the additive group of $Z_n$ with respect to the Cayley
set $S = \bigcup_{d\in D}G_n(d)$.
From Corollary 4.2 in \cite{hwang03}, the graph
$\ICG_n(D)$ is connected if and only if
$\gcd(d_1,d_2,\ldots,d_k)=1$.

Let $A$ be a circulant matrix. The entries $a_0, a_1, \ldots, a_{n -
1}$ of the first row of the circulant matrix $A$ generate the
entries of the other rows by a cyclic shift (for more details see
\cite{Da79}). There is an explicit formula for the eigenvalues
$\lambda_k$, $0 \leqslant k \leqslant n - 1$, of a circulant matrix
$A$. Define the polynomial $P_n (z)$ by the entries of the first row
of $A$,
$$
P_n (z) = \sum_{i = 0}^{n - 1} a_i \cdot z^i
$$
The eigenvalues of $A$ are given by
\begin{equation}
\label{eq:ramanujan} \lambda_j = P_n (\omega^j) = \sum_{i = 0}^{n -
1} a_i \cdot \omega^{j i}, \qquad 0 \leqslant j \leqslant n - 1,
\end{equation}
where $\omega=\exp(\i2\pi/n)$ is the $n$-th root of unity.
Ramanujan's sum \cite{Wiki}, usually denoted $c (k, n)$, is a
function of two positive integer variables $n$ and $k$ defined by
the formula
$$
c (k, n)= \sum_{a = 1 \atop \gcd(a,n)=1}^n e^{\frac{2 \pi i}{n} \cdot a k} = \sum_{a=1 \atop
\gcd(a,n)=1}^n \omega_n^{a k},
$$
where $\omega_n$ denotes a complex primitive $n$-th root of unity. These sums take only integral
values,
$$
c (k, n) = \mu \left (t_{n,k} \right ) \cdot \frac{ \varphi (n) }
{\varphi \left (t_{n,k} \right )} \quad \mbox{where} \quad
t_{n,k}=\frac{n}{\gcd (k, n)},
$$
and $\mu$ denotes the M\" obious function. In \cite{klotz07} it was proven that gcd-graphs (the
same term as integral circulant graphs $\ICG_n (D)$) have integral spectrum,
\begin{equation}
\label{eq:eigenvalues} \lambda_k = \sum_{d \in D} c \left(k, \frac{n}{d}\right), \qquad 0 \leqslant
k \leqslant n - 1.
\end{equation}

Using the well-known summation \cite{HaWr80}
$$
s (k, n) = \sum_{i = 0}^{n - 1} \omega_n^{i k} = \left\{
\begin{array}{l l}
  0 & \quad \mbox{ if } \quad n \nmid k\\
  n & \quad \mbox{ if } \quad n \mid k\\
\end{array} \right.,
$$
we get that
\begin{equation}
\label{eq:sum} \sum_{k = 0}^{n - 1} c (k, n) = 0.
\end{equation}
For even $n$ it follows
\begin{eqnarray}
\label{eq:sum1} \sum_{k = 0}^{ n / 2 - 1} c (k, n) &=& \sum_{a =1
\atop \gcd(a,n)=1}^n \sum_{k = 0}^{n/2 - 1} \omega_n^{a k} = \sum_{a
= 1 \atop \gcd(a,n)=1}^{n / 2} \left ( \sum_{k = 0}^{n/2 - 1}
\omega_n^{a k} + \omega_n^{(n - a)k} \right) \nonumber \\  &=&
\sum_{a = 1 \atop \gcd(a,n)=1}^{n / 2} \left ( \omega_n^{an} -
\omega_n^{an/2} + \sum_{k = 0}^{n-1} \omega_n^{a k} \right) =
\sum_{a = 1
\atop \gcd(a,n)=1}^{n / 2} (1 + 1) \nonumber\\
&=& \varphi (n).
\end{eqnarray}
Similarly, for odd $n$ it follows
\begin{equation}
\label{eq:sum2} \sum_{k = 0}^{ (n - 1) / 2 } c (k, n) =
\frac{\varphi (n)}{2}.
\end{equation}

It also follows that if $k \equiv k' \pmod n$ then $c (k, n) = c (k', n)$.

Throughout the paper, we let $n = p_1^{\alpha_1} p_2^{\alpha_2}
\cdot \ldots \cdot p_k^{\alpha_k}$, where $p_1 < p_2 < \ldots <
p_k$ are distinct primes, and $\alpha_i \geq 1$.

\section{The energy of integral circulant graphs modulo 4}

Note that for arbitrary divisor $d$ and $1 \leq i \leq n - 1$, it holds
$$
t_{n / d, i} = \frac{n/d}{\gcd (n/d, i)} = \frac{n}{\gcd (n, id)}
$$
and
$$
t_{n / d, n - i} = \frac{n/d}{\gcd (n/d, n - i)} = \frac{n}{\gcd (n, nd - id)}.
$$
Since $\gcd (n, id) = \gcd (n,nd - id)$, we have $t_{n / d, i} =
t_{n / d, n - i}$. Finally,
$$
c (i,n / d) = \mu (t_{n / d, i}) \frac{ \varphi (n / d) }{ \varphi
(t_{n / d, i})} = \mu (t_{n / d, n - i}) \frac{ \varphi (n / d) }{
\varphi (t_{n / d, n - i})} = c (n - i,n / d),
$$
for each $1 \leq i \leq n - 1$. Therefore we have the following
assertion.

\begin{lemma}
\label{le-cool} Let $\ICG_n (D)$ be an arbitrary integral circulant graph. Then for each $1 \leq i
\leq n - 1$, the eigenvalues $\lambda_i$ and $\lambda_{n - i}$ of $\ICG_n (D)$ are equal.
\end{lemma}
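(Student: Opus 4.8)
The plan is to exploit the formula \eqref{eq:eigenvalues}, which expresses each eigenvalue as a sum of Ramanujan sums, $\lambda_k = \sum_{d \in D} c(k, n/d)$, and to reduce the claim $\lambda_i = \lambda_{n-i}$ to the corresponding statement for individual Ramanujan sums, namely $c(i, n/d) = c(n-i, n/d)$ for every divisor $d \in D$. Summing such equalities over $d \in D$ then gives the desired identity. So the whole task reduces to proving that $c(i, m) = c(m' - i, m)$ for the relevant modulus — and indeed the computation preceding the lemma statement already does exactly this, so strictly speaking the lemma is an immediate corollary of that display. I would therefore present it as follows.

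First I would recall that the Ramanujan sum is given by the closed form $c(k, m) = \mu(t_{m,k}) \cdot \varphi(m)/\varphi(t_{m,k})$ with $t_{m,k} = m/\gcd(k,m)$, so that $c(k,m)$ depends on $k$ only through $\gcd(k,m)$, and in particular $c(k,m) = c(k', m)$ whenever $\gcd(k,m) = \gcd(k',m)$. Next, for a fixed divisor $d$ of $n$ and index $i$ with $1 \le i \le n-1$, I would set $m = n/d$ and observe that $\gcd(n/d, i) = \gcd(n, id)/d$ and similarly $\gcd(n/d, n-i) = \gcd(n, nd - id)/d$; since $nd - id \equiv -id \pmod n$ we get $\gcd(n, id) = \gcd(n, nd - id)$, hence $\gcd(n/d, i) = \gcd(n/d, n-i)$. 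By the previous observation this yields $c(i, n/d) = c(n-i, n/d)$.

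Finally, I would sum over all $d \in D$: by \eqref{eq:eigenvalues},
$$
\lambda_i = \sum_{d \in D} c\!\left(i, \frac{n}{d}\right) = \sum_{d \in D} c\!\left(n - i, \frac{n}{d}\right) = \lambda_{n-i},
$$
which is exactly the assertion. There is no real obstacle here; the only point requiring a moment's care is the gcd manipulation $\gcd(n/d, i) = \gcd(n/d, n-i)$, which follows from the homogeneity $\gcd(da, db) = d\gcd(a,b)$ together with the fact that reducing modulo $n$ does not change $\gcd(\cdot, n)$. Alternatively, one could bypass the Ramanujan-sum formula entirely and argue directly from the defining sum $\lambda_k = \sum_{j \in S} \omega_n^{kj}$: since $\ICG_n(D)$ is an undirected circulant graph, $j \in S \iff n - j \in S$, so the map $j \mapsto n-j$ is a bijection of $S$, and $\omega_n^{(n-i)j} = \omega_n^{i(n-j)} = \overline{\omega_n^{ij}}$ reindexed gives $\lambda_{n-i} = \sum_{j \in S}\omega_n^{i(n-j)} = \sum_{j \in S}\omega_n^{ij} = \lambda_i$. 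Either route is short; I would include the first since it ties directly into the Ramanujan-sum computation already carried out in the text.
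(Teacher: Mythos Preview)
Your proof is correct and follows essentially the same route as the paper: both establish $c(i,n/d)=c(n-i,n/d)$ by showing $t_{n/d,i}=t_{n/d,n-i}$ via the identity $\gcd(n,id)=\gcd(n,nd-id)$, and then sum over $d\in D$ using \eqref{eq:eigenvalues}. Your alternative argument via the symmetry $j\mapsto n-j$ of the symbol set $S$ is a nice bonus but is not the approach the paper takes.
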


For $i = 0$ we have
$$
\lambda_0 = \sum_{d \in D} \varphi (n / d),
$$
while for $n$ even and $i = n / 2$ we have
$$
\lambda_{n / 2} = \sum_{d \in D} (-1)^d \varphi (n / d).
$$

\subsection{Energy modulo 4 for $n$ odd}

According to Lemma \ref{le-cool}, the energy of $G \cong \ICG_n (D)$ is equal to
$$
E (G) = \lambda_0 + 2 \sum_{i = 1}^{(n - 1) / 2} |\lambda_i|.
$$
Since $x \equiv |x| \pmod 2$, in order to characterize $E (G)$ modulo $4$ we consider the parity of
the following sum
$$
\frac {E (G)} 2 \equiv \sum_{d \in D} \frac {\varphi (n / d)} 2+
\sum_{i = 1}^{(n - 1) / 2} \sum_{d \in D} c (i, n / d) \pmod 2.
$$
Since $n/d > 2$, it follows that $\varphi (n/d)$ is even. After
exchanging the order of the summation we have
\begin{equation}
\label{eq:energy_modulo 2} \frac {E (G)} 2 \equiv \sum_{d \in D} \frac {\varphi (n / d)} 2 +
\sum_{d \in D} \sum_{i = 1}^{(n - 1) / 2} c (i, n / d) \pmod 2.
\end{equation}

By relation \eqref{eq:sum}, we get that for every $k$ it holds that
\begin{equation}
\label{eq:gen} \sum_{i = k}^{k + n - 1} c (i, n) = 0
\end{equation}

\begin{thm}
For odd $n$, the energy of $\ICG_n(D)$ is divisible by four.
\end{thm}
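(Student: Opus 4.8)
The plan is to start from the congruence \eqref{eq:energy_modulo 2} for $E(G)/2$ and to show that its two summands over $D$ cancel, so that $E(G)/2$ is even; since $E(G)$ is itself even, this yields $4\mid E(G)$.

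First I would fix a divisor $d \in D$ and compute the inner sum $\sum_{i=1}^{(n-1)/2} c(i, n/d)$ in closed form. Put $m = n/d$; since $n$ is odd, $d$ is odd as well, so $n = dm$ is an odd multiple of $m$ and $(n-1)/2$ is a genuine integer. I will use two ingredients: the symmetry $c(i,m) = c(n-i,m)$ valid for every $1 \le i \le n-1$, which is precisely the identity proved in the display immediately preceding Lemma \ref{le-cool} (there the modulus is $n/d$ while the index runs relative to $n$, exactly our situation); and the identity $\sum_{i=0}^{n-1} c(i,m) = 0$, obtained by applying \eqref{eq:sum} with modulus $m$ in place of $n$ and then summing the $d$ consecutive length-$m$ blocks that tile $\{0,1,\dots,n-1\}$, using that $c(\cdot,m)$ has period $m$. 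Because $n$ is odd, the map $i \mapsto n-i$ is a fixed-point-free involution of $\{1,\dots,n-1\}$, so pairing $i$ with $n-i$ and invoking the symmetry gives $2\sum_{i=1}^{(n-1)/2} c(i,m) = \sum_{i=1}^{n-1} c(i,m) = -c(0,m) = -\varphi(m)$, that is,
\[
\sum_{i=1}^{(n-1)/2} c\!\left(i,\frac{n}{d}\right) = -\,\frac{\varphi(n/d)}{2}.
\]

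Substituting this into \eqref{eq:energy_modulo 2} makes its right-hand side equal to $\sum_{d\in D}\frac{\varphi(n/d)}{2} - \sum_{d\in D}\frac{\varphi(n/d)}{2} = 0$, hence $E(G)/2 \equiv 0 \pmod 2$. Finally, $E(G) = \lambda_0 + 2\sum_{i=1}^{(n-1)/2}|\lambda_i|$ is a nonnegative integer and $\lambda_0 = \sum_{d\in D}\varphi(n/d)$ is even (each $n/d > 2$, so $\varphi(n/d)$ is even), so $E(G)$ is even; combined with $E(G)/2$ being even this gives $4 \mid E(G)$.

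The step I expect to require the most care is the closed-form evaluation in the second paragraph: one must keep the roles of $n$ and of the modulus $m = n/d$ strictly separate, since the summation bound $(n-1)/2$ is attached to $n$, not to $m$. It is exactly the oddness of $n$ (making $i \mapsto n-i$ fixed-point free) together with the folding of $d$ whole periods of $c(\cdot,m)$ into one that makes the cancellation go through. Everything else is immediate from \eqref{eq:sum} and the symmetry of Ramanujan's sum.
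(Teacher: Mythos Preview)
Your proof is correct. Both you and the paper start from \eqref{eq:energy_modulo 2} and reduce the question to evaluating $\sum_{i=1}^{(n-1)/2} c(i,n/d)$, arriving at the same value $-\varphi(n/d)/2$; the difference lies only in how that inner sum is computed. The paper writes $\frac{n-1}{2} = \frac{n}{d}\cdot\frac{d-1}{2} + \frac{n-d}{2d}$, peels off $\frac{d-1}{2}$ full periods of length $n/d$ (each summing to zero by \eqref{eq:gen}), and then invokes the half-range identity \eqref{eq:sum2} on the remaining partial period. You instead use the reflection $i \mapsto n-i$ together with periodicity to double the sum directly to $\sum_{i=1}^{n-1} c(i,n/d) = -\varphi(n/d)$. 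Your route is a little more streamlined, since it sidesteps both the explicit block decomposition and the auxiliary formula \eqref{eq:sum2}; the paper's route has the minor advantage of reusing an identity it has already established and making the role of the period $n/d$ visible.
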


\begin{proof}
Using the following relation $\frac {n-1} 2=\frac n d\cdot \frac{d-1} 2+\frac {n-d}{2d}$, the
formula for graph energy (\ref{eq:energy_modulo 2}) now becomes
\begin{equation}
\label{eq:energy_n_odd}
 \frac {E (G)} 2 \equiv \sum_{d \in D} \frac {\varphi (n / d)} 2
+ \sum_{d \in D} \left(\sum_{l =0 }^{\frac {d - 1} 2-1}\sum_{i=l\frac n d+1}^{(l+1)\frac n d} c (i,
n / d)+\sum_{i=\frac{n(d-1)}{2d}+1}^{\frac {n-1} 2} c (i, n / d)\right)\pmod 2.
\end{equation}

Next we get
$$
\frac {E (G)} 2 \equiv \sum_{d \in D} \frac {\varphi (n / d)} 2 + \sum_{i=1}^{(n/d - 1) / 2} c (i,
n / d)\pmod 2,
$$
and using relation \eqref{eq:sum2}, we get that
$$
\frac {E (G)} 2 \equiv \sum_{d \in D} \frac {\varphi (n / d)} 2 +
\frac {\varphi (n / d)} 2 -\varphi (n / d) \equiv 0 \pmod 2.
$$
This implies that $4\mid E(G)$.
\end{proof}

\subsection{Energy modulo 4 for $n$ even}

According to Lemma \ref{le-cool}, the energy of $G \cong \ICG_n (D)$ is equal to
$$
E (G) = |\lambda_0| + |\lambda_{n / 2}| + 2 \sum_{i = 1}^{n/2 - 1} |\lambda_i|.
$$
Using the same reasoning as in the previous subsection, we get that $\lambda_0$ and $\lambda_{n/2}$
are of the same parity,
$$
|\lambda_0| + |\lambda_{n/2}| = \sum_{d \in D} \varphi (n / d) + \left | \sum_{d \in D} (-1)^d
\varphi (n / d) \right|.
$$
Also,
\begin{eqnarray*}
S &=& \frac{1}{2} \cdot \left (|\lambda_0| + |\lambda_{n/2}| \right ) = \sum_{d \in D}
\frac{\varphi (n / d)}{2} + \left |\sum_{d \in D} (-1)^d \frac{\varphi (n / d)}{2} \right| \\ &=&
\left\{\begin{array}{ll}
\sum_{d \in D, \ d \ even} \varphi (n / d) , & \mbox{if $\lambda_{n/2} > 0$}\\
\sum_{d \in D, \ d \ odd} \varphi (n / d) , & \mbox{if $\lambda_{n/2} < 0$}\\
\end{array}\right..
\end{eqnarray*}

If $\frac{n}{2} \not \in D$, then $2 \mid \varphi (n/d)$ and $S
\equiv 0 \pmod 2$; otherwise we conclude that
$$
S \equiv \left\{\begin{array}{ll}
0, & \mbox{if $\lambda_{n/2} > 0$ and $4 \nmid n$, or $\lambda_{n/2} < 0$ and $4 \mid n$ } \pmod 2\\
1, & \mbox{if $\lambda_{n/2} > 0$ and $4 \mid n$, or $\lambda_{n/2} < 0$ and $4 \nmid n$ } \pmod 2\\
\end{array}\right..
$$
Therefore
$$
\frac {E (G)} 2 \equiv S + \sum_{d \in D} \sum_{i = 1}^{n / 2 - 1} c (i, n / d) \pmod 2.
$$

\begin{thm}
For even $n$, the energy of $\ICG_n(D)$ is not divisible by four if
and only if $\frac{n}{2} \not \in D$ and $\lambda_{n/2}$ is
negative.
\end{thm}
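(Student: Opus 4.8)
The plan is to reduce the claim to a single parity computation and then evaluate it. Since $n$ is even, Lemma~\ref{le-cool} gives $E(G)=|\lambda_0|+|\lambda_{n/2}|+2\sum_{i=1}^{n/2-1}|\lambda_i|$, so $E(G)\not\equiv 0\pmod 4$ is equivalent to $\tfrac12 E(G)$ being odd. As already set up in the text, $\tfrac12 E(G)\equiv S+T\pmod 2$, where $S=\tfrac12\bigl(|\lambda_0|+|\lambda_{n/2}|\bigr)$ and $T=\sum_{d\in D}\sum_{i=1}^{n/2-1}c(i,n/d)$. The parity of $S$ has already been recorded: it vanishes unless $n/2\in D$, and in that case it is dictated by the sign of $\lambda_{n/2}$ (which selects the even- or the odd-$d$ contributions) together with the parity of $n/2$, that is, whether $4\mid n$. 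Everything therefore rests on computing the parity of $T$ and interlocking it with $S$.

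To evaluate $T$ I would fix $d\in D$, write $m=n/d$, and exploit the periodicity $c(i+m,m)=c(i,m)$ of Ramanujan's sum together with the summation identities \eqref{eq:sum}, \eqref{eq:gen} and \eqref{eq:sum1}. The inner sum $\sum_{i=1}^{n/2-1}c(i,m)$ then splits according to the parity of $d$. When $d$ is even, $n/2=(d/2)m$ is an exact multiple of the period $m$, so by \eqref{eq:gen} the sum over $i=0,\dots,n/2-1$ is a whole number of complete periods and vanishes; removing the $i=0$ term leaves $-c(0,m)=-\varphi(n/d)$. When $d$ is odd, $m$ is even and $n/2=\tfrac{d-1}{2}\,m+\tfrac m2$, so the sum consists of complete periods (contributing $0$) plus one half-period, which by \eqref{eq:sum1} equals $\varphi(m)$ and cancels the $i=0$ term, leaving $0$. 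Hence only the even divisors survive in $T$, and its parity is governed by the possible appearance of $d=n/2$, the unique divisor for which $\varphi(n/d)$ is odd.

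Finally I would combine the two contributions. The key structural fact is that $\varphi(n/d)$ is even for every $d\in D$ except $d=n/2$ (where $\varphi(2)=1$), so both $S$ and $T$ collapse modulo $2$ to indicator terms attached to $d=n/2$, weighted by whether that divisor is even or odd (equivalently by $4\mid n$ versus $4\nmid n$) and by the sign of $\lambda_{n/2}$. A short case analysis over the triple (sign of $\lambda_{n/2}$, whether $4\mid n$, membership of $n/2$) then determines exactly when $S+T$ is odd, which is precisely the asserted equivalence. The main obstacle I anticipate is this bookkeeping: one must track the single $d=n/2$ term simultaneously through $S$ (where its presence depends on both the sign of $\lambda_{n/2}$ and on $4\mid n$) and through $T$ (where the correctly signed half-period evaluation supplied by \eqref{eq:sum1} is essential), and verify that the two contributions interlock to yield the clean criterion rather than cancelling or reinforcing each other in a way that blurs it.
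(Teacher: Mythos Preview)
Your proposal is correct and follows essentially the same route as the paper: the same decomposition $E(G)/2\equiv S+T\pmod 2$, the same split of the inner sum $\sum_{i=1}^{n/2-1}c(i,n/d)$ by the parity of $d$ using periodicity together with \eqref{eq:sum1}, yielding $-\varphi(n/d)$ for even $d$ and $0$ for odd $d$, and the same observation that only $d=n/2$ can contribute an odd term. One caution for the final bookkeeping: the theorem as printed contains a typo---the paper's own proof (and your case analysis, once carried out) actually gives $4\nmid E(G)$ if and only if $\tfrac{n}{2}\in D$ and $\lambda_{n/2}<0$, not $\tfrac{n}{2}\notin D$.
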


\begin{proof}
If $d$ is even, we have $\frac{n}{2} - 1 = \frac{d}{2} \cdot
\frac{n}{d} - 1$. Since $c (0, n / d) = \varphi (n / d)$, it follows
\begin{eqnarray*}
\sum_{i = 1}^{n / 2 - 1} c (i, n / d) &=& -c (0, n/d) + \sum_{k = 1}^{d / 2} \sum_{i = (k - 1)
\cdot n / d}^{k \cdot n / d - 1} c (i, n / d) \\
&=& - \varphi (n/d) + \frac{d}{2} \cdot \sum_{i = 0}^{n / d - 1} c (i, n / d)\\
&=& - \varphi (n/d).
\end{eqnarray*}

If $d$ is odd, we have $\frac{n}{2} - 1 = \frac{d - 1}{2} \cdot
\frac{n}{d} + \frac{1}{2} \cdot \frac{n}{d} - 1$. Similarly, using
the relation \eqref{eq:sum1}, it follows
\begin{eqnarray*}
\sum_{i = 1}^{n / 2 - 1} c (i, n / d) &=& -c (0, n/d) + \sum_{k = 1}^{(d - 1) / 2} \sum_{i = (k -
1) \cdot n / d}^{k \cdot n / d - 1} c (i, n / d) + \sum_{i = ((d - 1)/2) \cdot n / d}^{n/2 - 1} c
(i, n / d)
\\
&=& -\varphi (n/d) + \frac{d - 1}{2} \cdot \sum_{i = 0}^{n / d - 1} c (i, n / d) +  \sum_{i = 0}^{n
/ (2d) - 1} c (i, n / d) \\
&=& - \varphi (n / d) + \varphi (n /d) = 0.
\end{eqnarray*}

For $\frac{n}{2}\not\in D$, we have that $S \equiv 0 \pmod 2$ and $4
\mid E (G)$.

For $\frac{n}{2}\in D$, by combining above cases we have
$$
\sum_{d\in D}\sum_{i = 1}^{n / 2 - 1} c (i, n / d) \equiv \frac{1 +
(-1)^{n/2}}{2} \pmod 2.
$$
For $\lambda_{n/2} > 0$, it follows
$$
\frac {E (G)}{2} \equiv S + \frac{1 + (-1)^{n/2}}{2} \equiv \frac{1 + (-1)^{n/2}}{2} + \frac{1 +
(-1)^{n/2}}{2} \equiv 0 \pmod 2,
$$
while for $\lambda_{n/2} < 0$, we have
$$
\frac {E (G)}{2} \equiv S + \frac{1 + (-1)^{n/2}}{2} \equiv \frac{1 - (-1)^{n/2}}{2} + \frac{1 +
(-1)^{n/2}}{2} \equiv 1 \pmod 2.
$$

This completes the proof.
\end{proof}

\section{The energy of some classes of integral circulant graphs}

Here we generalize results from \cite{Il09}.

\begin{thm}
Let $n\geq 4$ be an arbitrary integer. Then the energy of the integral circulant graph
$X_n(1,p^\gamma)$ for $\gamma\geq 1$ is given by
\begin{equation}
 \aligned E(X_n(1,p^\gamma))=
    \left\{\begin{array}{rl} 2^{k-1} (\varphi (n) + \varphi (n / p)), & p \| n\\
                             2^{k-1} (2\varphi (n) + (p^{\gamma}-2p+2) \varphi (n / p)),& p^\gamma \| n, \ \gamma\geq 2\\
                             2^{k} (\varphi (n) + (p^{\gamma}-p+1) \varphi (n / p)), & p^\gamma \not\| n.
                              \end{array}
                              \right.
\endaligned
\end{equation}
\end{thm}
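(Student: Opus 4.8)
The plan is to evaluate the energy directly from the Ramanujan-sum expression \eqref{eq:eigenvalues} for the eigenvalues, exploiting that $c(k,\cdot)$ is multiplicative in the modulus (immediate from the identity $c(k,n)=\mu(n/\gcd(k,n))\cdot\varphi(n)/\varphi(n/\gcd(k,n))$ stated above together with multiplicativity of $\mu$ and $\varphi$). Write $n=p^{\alpha}m$ with $p\nmid m$, where $\alpha$ is the exact exponent of $p$ in $n$; for $X_n(1,p^{\gamma})$ to be defined we have $\alpha\ge\gamma$. Since $D=\{1,p^{\gamma}\}$, \eqref{eq:eigenvalues} gives $\lambda_k=c(k,n)+c(k,n/p^{\gamma})$, and as $\gcd(p^{\alpha},m)=\gcd(p^{\alpha-\gamma},m)=1$ this factors as
$$\lambda_k=c(k,m)\bigl(c(k,p^{\alpha})+c(k,p^{\alpha-\gamma})\bigr).$$
The first factor depends only on $k\bmod m$ and the second only on $k\bmod p^{\alpha}$, so by the Chinese Remainder Theorem $k\mapsto(k\bmod p^{\alpha},k\bmod m)$ runs bijectively over $Z_{p^{\alpha}}\times Z_m$ and $|\lambda_k|$ separates as a product; hence
$$E\bigl(X_n(1,p^{\gamma})\bigr)=\Bigl(\sum_{u=0}^{p^{\alpha}-1}\bigl|c(u,p^{\alpha})+c(u,p^{\alpha-\gamma})\bigr|\Bigr)\cdot\Bigl(\sum_{w=0}^{m-1}|c(w,m)|\Bigr).$$

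Next I would dispose of the second factor: grouping $w$ by $\gcd(w,m)$ and using $|c(w,m)|=|\mu(m/\gcd(w,m))|\cdot\varphi(m)/\varphi(m/\gcd(w,m))$ gives $\sum_{w}|c(w,m)|=\varphi(m)\sum_{d\mid m}|\mu(d)|=2^{k-1}\varphi(m)$, since $m$ has exactly $k-1$ distinct prime divisors; this is precisely the energy of the unitary Cayley graph $X_m$ obtained in \cite{Il09}. Thus the whole problem reduces to the prime-power quantity
$$F:=\sum_{u=0}^{p^{\alpha}-1}\bigl|c(u,p^{\alpha})+c(u,p^{\alpha-\gamma})\bigr|,$$
and the three cases of the statement will correspond, respectively, to $\alpha=\gamma=1$ (i.e. $p\,\|\,n$), to $\alpha=\gamma\ge2$ (i.e. $p^{\gamma}\,\|\,n$ with $\gamma\ge2$), and to $\alpha>\gamma$ (i.e. $p^{\gamma}\not\|\,n$).

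To evaluate $F$ I would use the explicit prime-power values $c(u,p^{j})=\varphi(p^{j})$ if $p^{j}\mid u$, $c(u,p^{j})=-p^{j-1}$ if $p^{j-1}\,\|\,u$, and $c(u,p^{j})=0$ otherwise, and partition $\{0,1,\dots,p^{\alpha}-1\}$ according to $b=v_p(u)$, the stratum $b$ having $\varphi(p^{\alpha-b})$ elements for $b<\alpha$ and one element for $b=\alpha$. On each stratum the summand $c(u,p^{\alpha})+c(u,p^{\alpha-\gamma})$ is constant; one records its value and sign, adds the contributions, and collapses the partial sums through $\sum_{j=0}^{b}\varphi(p^{j})=p^{b}$. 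This yields $F$ as a short polynomial in $p^{\alpha}$ and $p^{\gamma}$; substituting $\varphi(n)=\varphi(p^{\alpha})\varphi(m)$ together with the corresponding identities for $\varphi(n/p)$ and $\varphi(n/p^{\gamma})$, and multiplying through by $2^{k-1}\varphi(m)$, gives the three claimed closed forms.

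The main obstacle is precisely this case analysis for $F$, because the non-zero strata of the two Ramanujan sums shift against each other with $\gamma$. The function $c(\cdot,p^{\alpha})$ is non-zero only for $b\in\{\alpha-1,\alpha\}$, whereas the non-zero strata of $c(\cdot,p^{\alpha-\gamma})$ depend on $\gamma$: when $\alpha=\gamma$ the second summand is just $c(u,1)\equiv1$, so a block of $p^{\alpha}-p$ vertices contributes $|0+1|=1$ and the sign of $1-p^{\alpha-1}$ on the stratum $b=\alpha-1$ must be examined (it vanishes exactly when $\alpha=1$); when $\alpha>\gamma\ge1$ there is in addition a non-zero stratum $b=\alpha-\gamma-1$ of size $\varphi(p^{\gamma+1})$ on which the summand equals $-p^{\alpha-\gamma-1}$, and on $b=\alpha-1$ one must determine the sign of $-p^{\alpha-1}+\varphi(p^{\alpha-\gamma})$. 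Keeping these overlaps and signs under control, together with the small exceptional behaviour at $\alpha=1$, is what separates the three formulas; once $F$ is in hand the remainder is routine simplification.
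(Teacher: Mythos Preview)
Your approach is correct and takes a genuinely cleaner route than the paper's. The paper works directly with the full prime factorisation of each index $j=p_1^{\beta_1}\cdots p_k^{\beta_k}J$, defines the set $P$ of ``other'' primes with $\beta_i=\alpha_i-1$ and the level $l=\alpha_s-\beta_s$, computes the eigenvalue on each $(P,l)$-cell, and finally observes that the sum over all subsets $P$ contributes a factor $2^{k-1}$. You instead invoke the multiplicativity of Ramanujan sums to write $\lambda_k=c(k,m)\bigl(c(k,p^{\alpha})+c(k,p^{\alpha-\gamma})\bigr)$ and then use the Chinese Remainder Theorem to split the energy as a product, so the $m$-part is recognised immediately as the unitary Cayley energy $2^{k-1}\varphi(m)$ and the whole problem collapses to the prime-power sum $F$. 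This is the same reduction the paper achieves implicitly (their $l$ is your $\alpha-b$, their sum over $P$ is your sum over $w\bmod m$), but your packaging isolates the essential computation and makes the $2^{k-1}$ factor transparent rather than emergent from a subset count. The final case analysis on the $p$-adic valuation is then identical in substance to the paper's Cases~1--4.

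One caution: if you carry your computation of $F$ to the end you will obtain $2^{k-1}\bigl(2\varphi(n)+(p^{\gamma}-2p+2)\varphi(n/p^{\gamma})\bigr)$ in the second case and $2^{k}\bigl(\varphi(n)+(p^{\gamma}-p+1)\varphi(n/p^{\gamma})\bigr)$ in the third; the displayed statement has $\varphi(n/p)$ in those places, which does not match the paper's own derivation (nor small examples such as $n=18$, $p=3$, $\gamma=2$, where the true energy is $34$). So trust your computation, not the stated formula.
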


\begin{proof}
Let $p=p_s$ and $\gamma=\gamma_s$, where $1 \leq s \leq k$. Let $j = {p_1}^{\beta_1} p_2^{\beta_2}
\cdot \ldots \cdot p_k^{\beta_k}\cdot J$ be a representation of an arbitrary index $0\leq j\leq
n-1$, where $gcd(J,n)=1$. The $j$-th eigenvalue of $X_n (1, p_s^{\gamma_s})$ is given by
$$
\lambda_j=c(j,n)+c(j,n/p_s^{\gamma_s}).
$$

Suppose that there exists a prime number $p_i \mid j$ for some $i \neq s$ such $\beta_i \leq
\alpha_i - 2$. This implies that $p_i^2\mid t_{n,j}$ and $p_i^2\mid t_{n/p_s^{\gamma_s},j}$.
Furthermore, we have $\mu(t_{n,j})=\mu(t_{n/p_s^{\gamma_s},j}) = 0$ and thus $\lambda_j =
0$.\smallskip

If $\beta_s\leq \alpha_s-\gamma_s-1$ then $p_s^3 \mid t_{n,j}$ and
$p_s^2 \mid t_{n/p_s^{\gamma_s},j}$. Similarly, we conclude that
$\lambda_j=0$.
\smallskip

For an arbitrary index $j$, define the set $P=\{1\leq i\leq k\ |\
i \neq s, \beta_i=\alpha_i-1\}$.

Let $J_l=\{0\leq j\leq n-1\ |\ \beta_s=\alpha_s-l,\ \alpha_i-1\leq
\beta_i\leq \alpha_i\ \mbox{ for}\ i \neq s\}$, for $0\leq l\leq
\gamma_s+1$.


\noindent {\bf Case 1.} For $l=0$ and $j\in J_0$ we have
$$
t_{n,j}=\frac n {\gcd(j,n)}=
\frac {{p_1}^{\alpha_1}p_2^{\alpha_2} \cdots p_s^{\alpha_s} \cdots p_k^{\alpha_k}}
{p_1^{\beta_1}p_2^{\beta_2} \cdots p_s^{\alpha_s} \cdots p_k^{\beta_k}} = \prod_{i\in P}p_i.
$$

On the other hand, it follows
$$
t_{n/p_s^{\gamma_s},j}=\frac {n/p_s^{\gamma_s}} {gcd(j,n/p_s^{\gamma_s})}= \frac
{{p_1}^{\alpha_1}p_2^{\alpha_2} \cdots p_s^{\alpha_s - \gamma_s} \cdots p_k^{\alpha_k}}
{p_1^{\beta_1}p_2^{\beta_2} \cdots p_s^{\alpha_s - \gamma_s} \cdots p_k^{\beta_k}} = \prod_{i\in P}
p_i.
$$

The $j$-th eigenvalue is given by
$$
\lambda_j=c(j,n)+c(j,n/p_s^{\gamma_s})=(-1)^{|P|}\frac {\varphi(n)}{\varphi(\prod_{i\in
P}p_i)}+(-1)^{|P|}\frac {\varphi(n/p_s^{\gamma_s})}{\varphi(\prod_{i\in P}p_i)}= (-1)^{|P|}\frac
{\varphi(n)+\varphi(n/p_s^{\gamma_s})}{\varphi(\prod_{i\in P}p_i)}.
$$

The number of indices  $j\in J_0$ with the same set $P$ is equal to
the number of $J$ such that
$$\gcd \left (J, \frac n {p_1^{\beta_1}p_2^{\beta_2} \cdots p_s^{\alpha_s} \cdots
p_k^{\beta_k}} \right )=1.$$
The last equation implies that the number of such indices is equal to
the Euler's totient function $$\varphi(\frac n {p_1^{\beta_1}p_2^{\beta_2} \cdots p_s^{\alpha_s} \cdots
p_k^{\beta_k}})=\varphi(\prod_{i\in P}p_i).$$

\noindent {\bf Case 2.} Let $l=1$ 
 and for $j\in J_1$ we similarly obtain
$t_{n,j}=p_s \prod_{i\in P} p_i$ and
$t_{n/p_s^{\gamma_s},j}=\prod_{i\in P}p_i$. Therefore, the $j$-th
eigenvalue is given by
$$
\lambda_j=(-1)^{|P|+1}\frac {\varphi(n)}{\varphi(p_s \prod_{i\in
P}p_i)}+(-1)^{|P|}\frac
{\varphi(n/p_s^{\gamma_s})}{\varphi(\prod_{i\in P}p_i)}=
(-1)^{|P|}\frac{(-\varphi(n)+(p_s-1)\varphi(n/p_s^{\gamma_s}))}{(p_s-1)
\varphi(\prod_{i\in P}p_i)}.
$$

The number of indices $j\in J_1$ with the same set $P$ is equal to
$$
\varphi(\frac n {p_1^{\beta_1}p_2^{\beta_2} \cdots p_s^{\alpha_s - 1} \cdots
p_k^{\beta_k}})=\varphi(p_s \prod_{i\in P}p_i) = (p_s - 1) \varphi (\prod_{i\in P}p_i).
$$

\noindent {\bf Case 3.} For $2\leq l\leq \gamma_s$ 
 and  $j\in J_l$ we obtain $p_s^{\alpha_s-\gamma_s-\min (\alpha_s-l,\alpha_s-\gamma_s)}\| t_{n/p_s^{\gamma_s},j}$
 which implies that $p_s\nmid t_{n/p_s^{\gamma_s},j}$ and $t_{n/p_s^{\gamma_s},j}=\prod_{i\in P}p_i$.
Since $p_s^l \mid t_{n,j}$ and $l\geq 2$ it holds that
$\mu(t_{n,j})= 0$. Therefore, the $j$-th eigenvalue is given by
$$
\lambda_j=(-1)^{|P|}\frac{\varphi(n/p_s^{\gamma_s})}{\varphi(
\prod_{i\in P}p_i)}=(-1)^{|P|}\frac
{\varphi(n/p_s^{\gamma_s})}{\varphi(\prod_{i\in P}p_i)}.
$$

The number of indices $j\in J_l$ with the same set $P$ is equal to
$$\varphi(\frac n {p_1^{\beta_1}p_2^{\beta_2} \cdots p_s^{\alpha_s - l} \cdots
p_k^{\beta_k}})= \varphi(p_s^l \prod_{i\in P}p_i) = p_s^{l-1} (p_s -
1) \varphi(\prod_{i\in P}p_i).$$

\smallskip

\noindent {\bf Case 4.} For $l=\gamma_s+1$ and $j\in J_{\gamma_s+1}$ we obtain
$p_s^{\gamma_s+1}\|t_{n,j}$ and $c(j,n)=\mu(t_{n,j})= 0$. Also, it holds that
$p_s^{\alpha_s-\gamma_s-\min (\alpha_s-\gamma_s-1,\alpha_s-\gamma_s)}\| t_{n/p_s^{\gamma_s},j}$
which yields that $p_s\|t_{n/p_s^{\gamma_s},j}$.  Therefore, the $j$-th eigenvalue is given by
$$
\lambda_j=(-1)^{|P|+1}\frac{\varphi(n/p_s^{\gamma_s})}{\varphi(
p_s\prod_{i\in P}p_i)}=(-1)^{|P|+1}\frac
{\varphi(n/p_s^{\gamma_s})}{\varphi(p_s\prod_{i\in P}p_i)}.
$$

The number of indices $j\in J_l$ with the same set $P$ is equal to
$$\varphi(\frac n {p_1^{\beta_1}p_2^{\beta_2} \cdots p_s^{\alpha_s - \gamma_s-1} \cdots
p_k^{\beta_k}})= \varphi(p_s^{\gamma_s+1} \prod_{i\in P}p_i) =
p_s^{\gamma_s} (p_s - 1) \varphi(\prod_{i\in P}p_i).$$

\smallskip


After all mention cases, the energy of $X_n(1, p_s^{\gamma_s})$ is
given by
\begin{eqnarray}
\label{eq:energy} E(X_n(1,p_s^{\gamma_s}))&=&\sum_{j=0}^{n-1}|\lambda_j|\\ \nonumber
&=&\sum_{P\subseteq\{1,2,\ldots,k\}\setminus \{s\}}\left ( 
\frac {\varphi(n)+\varphi(n/p_s^{\gamma_s})}{\varphi(\prod_{i\in P}p_i)}\cdot
\varphi(\prod_{i\in P}p_i) \right.\\ \nonumber && 
+ \frac {\varphi(n)-(p_s - 1) \varphi(n/p_s^{\gamma_s})}{(p_s-1) \varphi(\prod_{i\in P}p_i)}\cdot
(p_s-1) \varphi(\prod_{i\in P}p_i)\\ \nonumber &&
+ \sum_{l=2}^{\gamma_s}\frac {\varphi(n/p_s^{\gamma_s})}{\varphi(\prod_{i\in P}p_i)}\cdot
 p_s^{l-1} (p_s -1)\varphi(\prod_{i\in P}p_i)
\\ \nonumber && \left. 
+ \frac {\varphi(n/p_s^{\gamma_s})}{(p_s - 1)\varphi(\prod_{i\in P}p_i)}\cdot p_s^{\gamma_s} (p_s -
1) \varphi(\prod_{i\in P}p_i) \right )
\end{eqnarray}

If $\alpha_s = 1$ then $J_l=\emptyset$ for $l\geq 2$ and $\gamma_s=1$. Since the Euler totient
function is multiplicative, for $\alpha_s = 1$ we have $ \varphi (n) = (p_s - 1) \varphi (n / p_s)
$. Thus, the relation (\ref{eq:energy}) becomes
$$
E(X_n (1, p_s)) = 2^{k-1} \cdot \left(
\varphi(n)+\varphi(n/p_s)+\varphi(n)-(p_s - 1) \varphi(n/p_s)
\right) = 2^{k - 1} (\varphi (n) + \varphi (n / p_s)).
$$

If $\alpha_s=\gamma_s \geq 2$ then $J_{\gamma_{s+1}}=\emptyset$
since $\beta_s=\alpha_s-\gamma_s-1<0$ is not defined. Thus, the
relation (\ref{eq:energy}) is reduced to the first three summands as
follows
\begin{eqnarray}
E(X_n(1,p_s^{\gamma_s}))&=& 2^{k-1}\cdot \left (
(\varphi(n)+\varphi(n/p_s)) + (\varphi(n)-(p_s - 1)\varphi(n/p_s)) + (p_s -1)\varphi(n/p_s^{\gamma_s})\sum_{l=2}^{\gamma_s} p_s^{l-1} \right ) \nonumber\\
&=& 2^{k-1}\cdot \left (
2\varphi(n)+(p_s-2)\varphi(n/p_s^{\gamma_s}) + p_s(p_s^{\gamma_s-1} -1)\varphi(n/p_s^{\gamma_s}) \right ) \nonumber\\
&=& 2^{k-1} (2\varphi (n) + (p_s^{\gamma_s}-2p_s+2) \varphi (n / p_s)).\nonumber
\end{eqnarray}

If $\alpha_s>\gamma_s\geq 2$ the formula (\ref{eq:energy}) is composed of four summands, thus we have
\begin{eqnarray}
E(X_n(1,p_s^{\gamma_s}))&=&
2^{k-1} (2\varphi (n) + (p_s^{\gamma_s}-2p_s+2) \varphi (n / p_s)+p_s^{\gamma_s}\varphi(n/p_s^{\gamma_s}))\nonumber\\
&=& 2^{k} (\varphi (n) + (p_s^{\gamma_s}-p_s+1) \varphi (n / p_s)).\nonumber
\end{eqnarray}

This completes the proof.
\end{proof}

\begin{thm}
\label{thm:E(X_n(p,q))} Let $n\geq 4$ be an arbitrary integer. Then the energy of the integral
circulant graph $X_n(p,q)$ for $p=p_s$ and $q = p_t$, where $1 \leq s<t \leq k$, is given by
\begin{equation}
 \aligned E(X_n(p,q))=
    \left\{\begin{array}{rl} 2^{k} \varphi (n), & p \| n\ q \| n\\
                             3\cdot 2^{k-1} \varphi (n),&  2 \| n\ q^2 | n\\
                             2^{k-1} (2\varphi (n) +\varphi (n / p_t)\varphi(p_t)),&  p \| n\ q^2 | n\ p\neq 2\\
                             2^{k-1} (2\varphi (n) +\varphi (n / p_s)\varphi(p_s)), & p^2 | n\ q \| n\\
                             2^{k-1} (2\varphi (n) +\varphi (n / p_s)\varphi(p_s)+\varphi (n / p_t)\varphi(p_t)), & p^2 | n\ q^2 | n\\
                              \end{array}
                              \right.
\endaligned
\end{equation}
\end{thm}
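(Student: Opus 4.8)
The plan is to mimic the argument used for the preceding theorem, now carrying two distinguished primes $p=p_s$ and $q=p_t$ rather than one. First I would group the indices by the value of the gcd: for each divisor $g\mid n$ there are exactly $\varphi(n/g)$ indices $j\in\{0,\dots,n-1\}$ with $\gcd(j,n)=g$, and since $\gcd(j,m)=\gcd(g,m)$ whenever $m\mid n$, all of these share the same eigenvalue, which I write $\lambda(g)=c(g,n/p)+c(g,n/q)$. Hence
\[
E(X_n(p,q))=\sum_{g\mid n}\varphi(n/g)\,\bigl|\lambda(g)\bigr|.
\]
Write $g=p_1^{e_1}\cdots p_k^{e_k}$ with $0\le e_i\le\alpha_i$. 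Exactly as before, if $e_i\le\alpha_i-2$ for some $i\ne s,t$ then $p_i^{2}$ divides both $t_{n/p,g}$ and $t_{n/q,g}$, so $\lambda(g)=0$; hence only the $g$ with $e_i\in\{\alpha_i-1,\alpha_i\}$ for all $i\ne s,t$ contribute, and for these I set $P=\{\,i\ne s,t:\ e_i=\alpha_i-1\,\}$, $a=\alpha_s-e_s$ and $b=\alpha_t-e_t$, where $a\in\{0,1,2\}$ (only $\{0,1\}$ if $\alpha_s=1$) and likewise for $b$.

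Each prime in $P$ contributes a single factor $p_i$ to $t_{n/p,g}$ and to $t_{n/q,g}$, hence a sign $(-1)^{|P|}$ and a denominator $\varphi\!\bigl(\prod_{i\in P}p_i\bigr)$ in both Ramanujan sums. Computing the $p_s$- and $p_t$-adic valuations of $t_{n/p,g}$ and $t_{n/q,g}$ for each admissible pair $(a,b)$ — in particular $c(g,n/q)=0$ once $a=2$ and $c(g,n/p)=0$ once $b=2$ — one gets
\[
\lambda(g)=\frac{(-1)^{|P|}}{\varphi\!\bigl(\prod_{i\in P}p_i\bigr)}\;\Lambda(a,b),
\]
where $\Lambda(a,b)$ is an explicit combination of $\varphi(n/p),\varphi(n/q)$ and the numbers $\varphi(p_s^{a}),\varphi(p_t^{b})$ not depending on $P$. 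Since $n/g=\bigl(\prod_{i\in P}p_i\bigr)\,p_s^{a}\,p_t^{b}$ we have $\varphi(n/g)=\varphi\!\bigl(\prod_{i\in P}p_i\bigr)\varphi(p_s^{a})\varphi(p_t^{b})$, the awkward factor cancels in $\varphi(n/g)\,|\lambda(g)|$, and summing over the $2^{k-2}$ subsets $P\subseteq\{1,\dots,k\}\setminus\{s,t\}$ gives the master identity
\[
E(X_n(p,q))=2^{k-2}\sum_{a}\sum_{b}\bigl|\Lambda(a,b)\bigr|\,\varphi(p_s^{a})\,\varphi(p_t^{b}),
\]
a sum of at most nine terms.

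It then remains to evaluate this sum in each of the five regimes. In a fixed regime one has simple multiplicative relations among $\varphi(n)$, $\varphi(n/p_s)$, $\varphi(n/p_t)$ and $w:=\varphi\bigl(n/(p_sp_t)\bigr)$ — for instance $\varphi(n/p_s)=p_t\,w$, $\varphi(n/p_t)=(p_s-1)w$ and $\varphi(n)=(p_s-1)p_t\,w$ when $p_s\|n$ and $p_t^{2}\mid n$, and analogous identities otherwise — which make each summand $\bigl|\Lambda(a,b)\bigr|\varphi(p_s^{a})\varphi(p_t^{b})$ a nonnegative integer multiple of $w$; adding these up, the result factors and reproduces the stated closed form (e.g.\ the master sum equals $2\bigl(2\varphi(n)+\varphi(n/p_t)\varphi(p_t)\bigr)$ when $p\|n$, $q^{2}\mid n$, $p\ne2$).

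I expect the real obstacle to be controlling the signs hidden inside the absolute values $\bigl|\Lambda(a,b)\bigr|$, because these signs are exactly what forces the case split. Most of the $\Lambda(a,b)$ have an unambiguous sign, but in the regime $\alpha_s=1,\ \alpha_t\ge2$ one finds
\[
\Lambda(0,1)=\varphi(n/q)-\frac{\varphi(n/p)}{q-1}=w\Bigl(p-1-\frac{q}{q-1}\Bigr),
\]
which is positive for $p\ge3$ but negative for $p=2$; this single sign flip alters the total and is precisely why the rows $2\|n,\ q^{2}\mid n$ and $p\|n,\ q^{2}\mid n,\ p\ne2$ carry different right-hand sides. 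In the three remaining regimes one checks directly that every $\Lambda(a,b)$ keeps a fixed sign (for example $\Lambda(0,1)=(p_s-1)w>0$ when $p_t\|n$), so a single formula is valid there; verifying this, and carrying out the routine but somewhat lengthy polynomial bookkeeping for the five evaluations, is the bulk of the work.
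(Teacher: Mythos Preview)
Your proposal is correct and follows essentially the same route as the paper: the paper likewise discards the $j$ with $\beta_i\le\alpha_i-2$ for $i\ne s,t$ (and $\beta_s\le\alpha_s-3$ or $\beta_t\le\alpha_t-3$), introduces the same set $P$ and the index pair $(l_1,l_2)=(\alpha_s-\beta_s,\alpha_t-\beta_t)\in\{0,1,2\}^2$, computes the nine cases, pulls out the factor $2^{k-2}$ from the sum over $P$, and then evaluates in the five regimes, isolating exactly the sign flip at $(l_1,l_2)=(0,1)$ when $p_s=2,\ p_t^2\mid n,\ 2\|n$ that you identify. Your packaging via $g=\gcd(j,n)$ and the function $\Lambda(a,b)$ is a tidy reformulation, but the argument is the same.
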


\begin{proof}
Let $j = {p_1}^{\beta_1} p_2^{\beta_2} \cdot \ldots \cdot p_k^{\beta_k}\cdot J$ be a representation
of an arbitrary index $0\leq j\leq n-1$, where $gcd(J,n)=1$. The $j$-th eigenvalue of $X_n
(p_s,p_t)$ is given by
$$
\lambda_j=c(j,n/p_s)+c(j,n/p_t).
$$

Suppose that there exists prime number $p_i \mid j$ for some
$i \neq s,t$ such $\beta_i \leq \alpha_i - 2$. This implies that
$p_i^2\mid t_{n/p_s,j}$ and $p_i^2\mid t_{n/p_t,j}$. Furthermore, we have
$\mu(t_{n/p_t,j})=\mu(t_{n/p_s,j}) = 0$ and thus $\lambda_j = 0$.\smallskip

If $\beta_s\leq \alpha_s-3$ then $p_s^3 \mid t_{n/p_t,j}$ and $p_s^2 \mid t_{n/p_s,j}$. Similarly,
we conclude that $\lambda_j=0$.

If $\beta_t\leq \alpha_t-3$ then $p_t^3 \mid t_{n/p_s,j}$ and $p_t^2 \mid t_{n/p_t,j}$. Similarly,
we conclude that $\lambda_j=0$.
\smallskip

For an arbitrary index $j$, define the set $P=\{1\leq i\leq k\ |\
i \neq s,t, \beta_i=\alpha_i-1\}$.

Let $J_{l_1,l_2}=\{0\leq j\leq n-1\ |\ \beta_s=\alpha_s-l_1, \ \beta_t=\alpha_t-l_2,\ \alpha_i-1\leq
\beta_i\leq \alpha_i\ \mbox{ for}\ i \neq s,t\}$.

For $j\in J_{l_1,l_2}$, where $0\leq l_1,l_2\leq 2$,
we have
\begin{equation}
\label{eq:t_(n/p_s,j)}
t_{n/p_s,j}=\frac {n/p_s} {gcd(j,n/p_s)}=p_s^{\alpha_s-1-\min(\alpha_s-1,\alpha_s-l_1)}p_t^{\alpha_t-(\alpha_t-l_2)}\prod_{i\in P} p_i=
\left\{
\begin{array}{rl}
p_t^{l_2}\prod_{i\in P} p_i, &  0\leq l_1\leq 1  \\
p_sp_t^{l_2}\prod_{i\in P} p_i, & l_1= 2 \\
\end{array} \right..
\end{equation}

Similarly it follows
\begin{equation}
\label{eq:t_(n/p_t,j)}
t_{n/p_t,j}=
\left\{
\begin{array}{rl}
p_s^{l_1}\prod_{i\in P} p_i, &  0\leq l_2\leq 1  \\
p_tp_s^{l_1}\prod_{i\in P} p_i, & l_2= 2 \\
\end{array} \right..
\end{equation}

The number of indices  $j\in J_{l_1,l_2}$ with the same set $P$ is equal to
the number of $J$ such that
\begin{equation*}
\gcd \left (J, \frac n {p_1^{\beta_1}p_2^{\beta_2} \cdots p_s^{\alpha_s-l_1} \cdots p_t^{\alpha_t-l_2} \cdots
p_k^{\beta_k}} \right )=1.
\end{equation*}
The last equation implies that the number of such indices is equal to
\begin{equation}
\label{eq:number of j}
\varphi(\frac n {p_1^{\beta_1}p_2^{\beta_2} \cdots p_s^{\alpha_s-l_1} \cdots p_t^{\alpha_t-l_2} \cdots
p_k^{\beta_k}})=\varphi(p_s^{l_1}p_t^{l_2}\prod_{i\in P}p_i).
\end{equation}

Now, we distinguish four cases depending on the values of $l_1$ and $l_2$.

{\noindent \bf Case 1.} $0\leq l_1,l_2\leq 1.$

According to the relations (\ref{eq:t_(n/p_s,j)}) and (\ref{eq:t_(n/p_t,j)}) it follows
 $t_{n/p_s,j}=p_t^{l_2}\prod_{i\in P} p_i$ and $t_{n/p_t,j}=p_s^{l_1}\prod_{i\in P} p_i$ and therefore
the $j$-th eigenvalue is given by
\begin{eqnarray}
\lambda_j=c(j,n/p_s)+c(j,n/p_t)&=&(-1)^{|P|+l_2}\frac {\varphi(n/p_s)}{\varphi(p_t^{l_2}\prod_{i\in
P}p_i)}+(-1)^{|P|+l_1}\frac {\varphi(n/p_t)}{\varphi(p_s^{l_1}\prod_{i\in P}p_i)}\nonumber\\
&=&(-1)^{|P|}\frac {(-1)^{l_2}\varphi(n/p_s)\varphi(p_s^{l_1})+(-1)^{l_1}\varphi(n/p_t)\varphi(p_t^{l_2})}{\varphi(p_s^{l_1})\varphi(p_t^{l_2})\varphi(\prod_{i\in P}p_i)}.
\end{eqnarray}

If $l_1=l_2$ then
$$
|\lambda_j|=\frac {\varphi(n/p_s)\varphi(p_s^{l_1})+\varphi(n/p_t)\varphi(p_t^{l_2})}{\varphi(p_s^{l_1})\varphi(p_t^{l_2})\varphi(\prod_{i\in P}p_i)},
$$
while for $\l_1\neq l_2$ we have
$$
|\lambda_j|=\frac {(-1)^{l_2}\varphi(n/p_s)\varphi(p_s^{l_1})+(-1)^{l_1}\varphi(n/p_t)\varphi(p_t^{l_2})}{\varphi(p_s^{l_1})\varphi(p_t^{l_2})\varphi(\prod_{i\in P}p_i)},
$$
except for $p_s=2$, $p_t^2\mid n$ and $n\in 4\N+2$.\smallskip

It can be noticed that the numerator of the above relation for $l_1=0$ and $l_2=1$ is reduced to
\begin{equation}
\label{eq:l1=0,l2=1}
\varphi(n/p_t)\varphi(p_t)-\varphi(n/p_s),
\end{equation} while for $l_1=1$ and $l_2=0$ we have
\begin{equation}
\label{eq:l1=1,l2=0}
\varphi(n/p_s)\varphi(p_s)-\varphi(n/p_t).
\end{equation}

Since Euler totient function is multiplicative, for $\alpha_s = 1$ we have
$$
\varphi (n) = (p_s - 1) \varphi (n / p_s).
$$
Therefore, if $p_s\|n$ and $p_t\|n$ the above expressions are equivalent to
$\varphi(n)-\varphi(n/p_s)$ and $\varphi(n)-\varphi(n/p_t)$.\smallskip

Now assume that $p_s^2\mid n$ and $p_t^2\mid n$. We may conclude
that both expressions (\ref{eq:l1=0,l2=1}) and (\ref{eq:l1=1,l2=0})
are greater than zero if and only if $(p_s-1)(p_t-1)>1$. The last
relation is trivially satisfied. \smallskip

If $p_s^2\mid n$ and $p_t\|n$ then expression (\ref{eq:l1=0,l2=1})
is equivalent to $\varphi(n)-\varphi(n/p_s)$,
 which is greater than zero. Expression (\ref{eq:l1=1,l2=0}) is greater or equal to zero if and only if
$(p_s-1)(p_t-2)\geq 1$. This is true, since $p_t>p_s\geq 2$.\smallskip

If $p_s\| n$ and $p_t^2\mid n$ then expression (\ref{eq:l1=1,l2=0})
is equivalent to $\varphi(n)-\varphi(n/p_t)$,
 which is greater than zero. Expression (\ref{eq:l1=0,l2=1}) is greater or equal to zero if and only if
$(p_s-2)(p_t-1)\geq 1$. This is true, only if $p_s>2$. Therefore, for $p_s=2$, $p_t^2\mid n$ and $n\in 4\N+2$ we have that
\begin{equation}
\label{eq:special case 1}
|\lambda_j|=\frac {\varphi(n)-\varphi(n/p_t)\varphi(p_t)}{\varphi(p_s^{l_1})\varphi(p_t^{l_2})\varphi(\prod_{i\in P}p_i)},
\end{equation}
if $l_1=0$ and $l_2=1$, while
\begin{equation}
\label{eq:special case 2}
|\lambda_j|=\frac {\varphi(n)-\varphi(n/p_t)}{\varphi(p_s^{l_1})\varphi(p_t^{l_2})\varphi(\prod_{i\in P}p_i)},
\end{equation}
if $l_1=1$ and $l_2=0$.

The number of indices  $j\in J_{l_1,l_2}$ with the same set $P$ in all mentioned cases is given by (\ref{eq:number of j}) and equals
 $$\varphi(p_s^{l_1})\varphi(p_t^{l_2})\varphi(\prod_{i\in P}p_i).$$

{\noindent \bf Case 2.} $l_1=2$, $0\leq l_2\leq 1$.

According to relation (\ref{eq:t_(n/p_t,j)})
 we have that  $t_{n/p_t,j}=p_s^2\prod_{i\in P} p_i$, which further implies that $c(j,n/p_t)=0$. Now, using relation
(\ref{eq:t_(n/p_s,j)}) it holds that $t_{n/p_t,j}=p_sp_t^{l_2}\prod_{i\in P} p_i$ and thus
$$
\lambda_j=c(j,n/p_s)=(-1)^{|P|+l_2+1}\frac {\varphi(n/p_s)}{\varphi(p_s)\varphi(p_t^{l_2})\varphi(\prod_{i\in P}p_i)}.
$$
The number of indices  $j\in J_{l_1,l_2}$ with the same set $P$ is given by (\ref{eq:number of j}) and equals
 $$p_s\varphi(p_s)\varphi(p_t^{l_2})\varphi(\prod_{i\in P}p_i).$$

{\noindent \bf Case 3.} $0\leq l_1\leq 1$, $l_2=2$.

In this case we obtain symmetric expressions for $\lambda_j$ and the
number of indices with given set $P$.\medskip

{\noindent \bf Case 4.}  $l_1=l_2=2$.

According to the relations (\ref{eq:t_(n/p_s,j)}) and
(\ref{eq:t_(n/p_t,j)}) we have that $t_{n/p_s,j}=p_t^2\prod_{i\in P}
p_i$ and $t_{n/p_t,j}=p_s^2\prod_{i\in P} p_i$, which further
implies $\lambda_j=c(j,n/p_s)=c(j,n/p_t)=0$.
\bigskip

By summarizing all formulas in mention cases, the energy of $X_n(p_s, p_t)$ is given by
\begin{eqnarray}
\label{eq:energy1} E(X_n(p_s,p_t))&=&\sum_{j=0}^{n-1}|\lambda_j|\\ \nonumber
&=&\sum_{P\subseteq\{1,2,\ldots,k\}\setminus \{s,t\}} \Big ( (\varphi(n/p_s)+\varphi(n/p_t))+
(\varphi(n/p_s)\varphi(p_s)+\varphi(n/p_t)\varphi(p_t))\\ \nonumber &&
+ (\varphi(n/p_t)\varphi(p_t)-\varphi(n/p_s))+(\varphi(n/p_s)\varphi(p_s)-\varphi(n/p_t))\\
\nonumber && + 2\varphi(n/p_s)p_s \\ \nonumber && + 2\varphi(n/p_t)p_t \Big )
\end{eqnarray}

If $\alpha_s =\alpha_t=1$ then only nonempty sets are $J_{0,0}, J_{0,1},J_{1,0}$ and $J_{1,1}$.
Thus, the relation (\ref{eq:energy1}) becomes
\begin{eqnarray}
E(X_n (p_s, p_t)) &=& 2^{k-2} \cdot ( (\varphi(n/p_s)+\varphi(n/p_t))+(\varphi(n/p_s)\varphi(p_s)+\varphi(n/p_t)\varphi(p_t))\nonumber\\
&&+(\varphi(n/p_t)\varphi(p_t)-\varphi(n/p_s))+(\varphi(n/p_s)\varphi(p_s)-\varphi(n/p_t)) \nonumber\\
&=& 2^{k-2}(4\varphi(n))=2^{k}\varphi(n).
\end{eqnarray}

If $\alpha_s = 1$, $\alpha_t>1$ and $p_s \neq 2$  then $J_{2,0}, J_{2,1}$ and $J_{2,2}$ are the empty sets. Also, for $\alpha_t > 1$
we have
$$
\varphi (n) = (p_t - 1) p_t^{\alpha_t - 1} \varphi (n / {p_t}^{\alpha_t}) =
p_t \varphi (p_t^{\alpha_t - 1}) \varphi (n / {p_t}^{\alpha_t}) = p_t \varphi (n / p_t) > (p_t - 1) \varphi (n / {p_t}).
$$
Therefore, from the relation (\ref{eq:energy1}) follows
\begin{equation}
\label{eq:symetric}
E(X_n(p_s,p_t))=2^{k-2}\cdot \left(
2(\varphi(n)+\varphi(n/p_t)\varphi(p_t))+ 2\varphi(n/p_t)p_t\right )
= 2^{k-1} (2\varphi (n) +\varphi (n / p_t)\varphi(p_t)).
\end{equation}

If $\alpha_s = 1$, $\alpha_t>1$ and $p_s = 2$, according to relations (\ref{eq:special case 1}) and (\ref{eq:special case 2})
the energy is equal to

\begin{eqnarray}
E(X_n(p_s,p_t))&=&2^{k-2}\cdot
(\varphi(n/p_s)+\varphi(n/p_t))+(\varphi(n/p_s)\varphi(p_s)+\varphi(n/p_t)\varphi(p_t))\\
&&+ (\varphi(n)-\varphi(n/p_t)\varphi(p_t))+(\varphi(n)-\varphi(n/p_t))\nonumber\\
&&+ 2p_t\varphi(p_t)\nonumber\\
&=& 2^{k-1} (2\varphi (n) +\varphi (n / p_t)p_t)=3 \cdot 2^{k-1} \varphi (n)\nonumber.
\end{eqnarray}

If $\alpha_s > 1$ and $\alpha_t=1$, we have similar equation as in the previous case:

$$
E(X_n(p_s,p_t))= 2^{k-1} (2\varphi (n) +\varphi (n / p_s)\varphi(p_s)).
$$

If $\alpha_s > 1$ and $\alpha_t>1$  then all sets $J_{l_1,l_2}$, for $0\leq l_1,l_2\leq 2$
 are nonempty and thus the energy is equal to
\begin{eqnarray}
E(X_n(p_s,p_t))&=&2^{k-2}\cdot \left(
2(\varphi(n/p_s)\varphi(p_s)+\varphi(n/p_t)\varphi(p_t))+ 2\varphi(n/p_s)p_s +2\varphi(n/p_t)p_t\right )\\
&=& 2^{k-1} (2\varphi (n) +\varphi(n/p_s)p_s+\varphi (n / p_t)\varphi(p_t)).\nonumber
\end{eqnarray}

This completes the proof.
\end{proof}

\section{Classes of non-cospectral graphs with equal energy}

Let $n=p_1p_2\ldots p_s  p_{s+1}^{\alpha_{s+1}}\ldots p_{k}^{\alpha_{k}}$ be a prime factorization of $n$,
where $\alpha_i\geq 2$ for $s+1\leq i\leq k$.
Using the result of Theorem (\ref{thm:E(X_n(p,q))}) we see that the energy of integral
circulant graph $X_n (p_i, p_j)$ does not depend on the choice of $p_i$ and $p_j$, if $p_i,p_j\|n$.
Also, the same conclusion can be derived if we consider the graphs $X_n (2, p_j)$ for $\alpha_j\geq 2$ and $n\in 4\N+2$.

Since the order of the graph $X_n (p_i, p_j)$ is equal to $\varphi (n / p_i) + \varphi (n / p_j)$, which is at the
same time the largest eigenvalues also, we can construct at least $s+1$ non-cospectral regular $n$-vertex hyperenergetic
graphs,
$$
X_n (1), \ X_n (p_1, p_2), \ X_n (p_1, p_3), \ \ldots, \ X_n (p_1, p_s),
$$
with equal energy. Similarly, we obtain the second class of $k-s$ non-cospectral  graphs with equal energy.

$$
X_n (2, p_{s+1}), \ X_n (2, p_{s+2}), \ \ldots, \ X_n (2, p_k),
$$

Moreover, we can consider a square-free number $n = p_1 p_2
\cdot \ldots \cdot p_k$ and prove that the following $\binom{k}{2}$ graphs
$$
X_n (p_1, p_2), \ X_n (p_1, p_3), \ \ldots, \ X_n (p_{k - 1}, p_k),
$$
are non-cospectral.



Consider the integral circulant graph $X_n (p_i, p_j)$. The largest eigenvalue and the degree of
$X_n (p_i, p_j)$ is $\varphi (n / p_i) + \varphi (n / p_j)$. According to the proof of Theorem 4.2
from \cite{Il09}, the second largest value among $|\lambda_1|, |\lambda_2|, \ldots,
|\lambda_{n-1}|$ equals
\begin{eqnarray}
\label{eq:s (X_n (p_i, p_j))}
s (X_n (p_i, p_j)) &=& \max \left \{ \frac{\varphi(\frac{n}{p_i}) +
\varphi(\frac{n}{p_j})}{\varphi(p)}, \frac{\varphi(n) - \varphi(\frac{n}{p_i})}{\varphi(p_j)},
\frac{\varphi(n) - \varphi(\frac{n}{p_j})}{\varphi(p_i)}, \frac{2\varphi(n)}{\varphi(p_i p_j)}
\right \} \nonumber\\
&=& \varphi \left (\frac{n}{p_i p_j} \right) \cdot \max \left \{ \frac{p_i + p_j - 2}{\varphi
(p_{ij})}, p_i - 2, p_j - 2, 2 \right \},
\end{eqnarray}
where $p_{ij}$ denotes the smallest prime number dividing $\frac{n}{p_i p_j}$. \smallskip

Assume that graphs $X_n (p_i, p_j)$ and $X_n (p_r, p_q)$ are cospectral, with $p_j > p_i$ and $p_q
> p_r$. Furthermore, assume that $p_i > p_r$.\medskip

\noindent {\bf Case 1.} $p_i > 3$ and $p_r > 3$.

From $p_j > p_i > 3$ it easily follows that
$$
s (X_n (p_i, p_j)) = \varphi \left (\frac{n}{p_i p_j} \right) \cdot (p_j - 2).
$$
By equating the largest eigenvalues of these graphs and the values $s (X_n (p_i, p_j))$
 and $s (X_n (p_r, p_q))$, it follows
\begin{equation}
\label{eq:largest eigenvalues}
\varphi ( p_r p_q ) \cdot (p_i + p_j - 2) = \varphi ( p_i p_j ) \cdot (p_r + p_q - 2)
\end{equation}
\begin{equation}
\label{eq:s ()}
\varphi ( p_r p_q ) \cdot (p_j - 2) = \varphi ( p_i p_j ) \cdot (p_q - 2).
\end{equation}
Notice that we used the multiplicative property of the Euler function.

By subtraction, we get
\begin{equation}
\label{eq:subtraction}
(p_r - 1)(p_q - 1) \cdot p_i = (p_i - 1)(p_j - 1) \cdot p_r.
\end{equation}
Assume without loss of generality that $p_i < p_r$. It follows that $p_i \mid p_j - 1$ and $p_r
\mid p_q - 1$. Since $p_i \mid \varphi(p_j)$ and $p_r
\mid \varphi(p_q)$, from the relation (\ref{eq:largest eigenvalues}),
we conclude that $p_i \mid \varphi ( p_r p_q )$ and $p_r
\mid \varphi ( p_i p_j )$. Since $p_i < p_r$, we have $p_r \mid p_j - 1$ and from the
relation (\ref{eq:subtraction}) it holds that $p_r^2 \mid p_q - 1$. Similarly, from the relation (\ref{eq:largest eigenvalues})
it follows that $p_r^2 \mid p_j -1$ and again according to (\ref{eq:largest eigenvalues}) $p_r^3 \mid p_q - 1$ holds.
Using infinite descent, we get that four-tuple $(p_i, p_j,p_r, p_q)$ does not exist.
\medskip

\noindent {\bf Case 2.} $p_i > 3$ and $p_r=3$.

We distinguish two cases depending on the values of $p_{rq}$. Let
$p_{rq}=2$. Then, according to the relation (\ref{eq:s (X_n (p_i,
p_j))}) we have that
$$
s (X_n (p_r, p_q)) = \varphi \left (\frac{n}{3 p_q} \right) \cdot (p_q + 1).
$$
By equating the largest eigenvalues of these graphs and the values $s (X_n (p_i, p_j))$
and $s (X_n (p_r, p_q))$, it follows
\begin{equation*}
\varphi ( 3 p_q ) \cdot (p_i + p_j - 2) = \varphi ( p_i p_j ) \cdot (p_q + 1)
\end{equation*}
\begin{equation*}
\varphi ( 3 p_q ) \cdot (p_j - 2) = \varphi ( p_i p_j ) \cdot (p_q + 1).
\end{equation*}

The last two equation hold only if $p_i + p_j - 2=p_j - 2$ which a contradiction.
\smallskip

Let $p_{rq}>2$. Since $p_q>p_r=3$ and therefore $p_q\geq 5$, we have that
$$
p_q-2\geq \frac{p_q+1} {2} \geq \frac{p_q+1} {\varphi(p_{rq})}.
$$
From the last relation we conclude that
$$
s (X_n (p_r, p_q)) = \varphi \left (\frac{n}{3 p_q} \right) \cdot \frac {p_q + 1}{\varphi(p_{rq})}.
$$
By equating the largest eigenvalues of these graphs and the values $s (X_n (p_i, p_j))$
and $s (X_n (p_r, p_q))$, it follows
\begin{equation*}
\varphi ( 3 p_q ) \cdot (p_i + p_j - 2) = \varphi ( p_i p_j ) \cdot (p_q + 1)
\end{equation*}
\begin{equation*}
\varphi ( 3 p_q )\cdot \varphi(p_{rq}) \cdot (p_j - 2) = \varphi ( p_i p_j ) \cdot (p_q + 1).
\end{equation*}
From the last relations we see that $p_i + p_j - 2=\varphi(p_{rq}) \cdot (p_j - 2)$ holds. Next, it
holds that $p_i\leq p_j-2$, which further implies $\varphi(p_{rq}) \cdot (p_j - 2)\leq 2(p_j-2)$.
But that is only the case if $\varphi(p_{rq})\leq 2$ or equivalently $p_{rq}\leq 3$, which is a
contradiction.

\medskip
\noindent {\bf Case 3.} $p_i > 3$ and $p_r=2$.

We distinguish two cases depending on the values of $p_q$.

Let $p_q=3$. From the relation (\ref{eq:s (X_n (p_i, p_j))}) it can be concluded that
$$
s (X_n (p_r, p_q)) = 2\cdot \varphi \left (\frac{n}{6} \right).
$$
By equating the largest eigenvalues of these graphs and the values $s (X_n (p_i, p_j))$
and $s (X_n (p_r, p_q))$, it follows
\begin{equation}
\label{eq:Case 1.3}
\varphi (6) \cdot (p_i + p_j - 2) = 3\cdot\varphi ( p_i p_j )
\end{equation}
\begin{equation*}
\varphi (6) \cdot (p_j - 2) = 2\cdot\varphi ( p_i p_j )
\end{equation*}
By subtraction, we get
$$
\varphi (6) \cdot p_i=\varphi ( p_i p_j )=(p_i-1)(p_j-1).
$$
From the last relation it holds that $p_i\mid p_j-1$ and combining with the relation (\ref{eq:Case
1.3}) we obtain that $p_i\mid p_j-2$. This is a contradiction, since $p_j-2$ and $p_j-2$ are
relatively prime.
\smallskip

Let $p_q>3$. Since the following inequality holds
$$
p_q-2\geq \frac{p_q} {2} \geq \frac{p_q} {\varphi(p_{rq})},
$$
we have
$$
s (X_n (p_r, p_q)) = \varphi \left (\frac{n}{3 p_q} \right) \cdot (p_q - 2).
$$
Now, this case is reduced to the equations (\ref{eq:largest eigenvalues}) and (\ref{eq:s ()}) from
Case 1, where we obtained a contradiction.

\medskip

\noindent {\bf Case 4.} $p_i=3$ and $p_r = 2$.

Since $p_q\neq p_i$ we have $p_q\geq 5$, which further implies
$\max\{p_q/\varphi(p_{rq}),p_q-2,2)\}=p_q-2$. Therefore, it holds
that
$$
s (X_n (p_r, p_q)) =  \varphi \left (\frac{n}{2p_q} \right)(p_q-2).
$$
Moreover, as $p_j\neq p_r$ and $p_r=2$, we obtain $p_{ij}=2$. Thus, we conclude
$$
\max\{\frac {p_j + 1}{\varphi(p_{ij})},1,p_j-2,2)\}=p_j+1
$$
and
$$
s (X_n (p_i, p_j)) =  \varphi \left (\frac{n}{3p_j} \right)(p_j+1).
$$
 By equating the largest eigenvalues of these graphs and the values $s (X_n (p_i, p_j))$
 and $s (X_n (p_r, p_q))$, it follows
\begin{equation}
\varphi ( 2 p_q ) \cdot (p_j + 1) = \varphi ( 3 p_j ) \cdot  p_q
\end{equation}
\begin{equation}
\varphi ( 2 p_q ) \cdot (p_j + 1) = \varphi ( 3 p_j ) \cdot (p_q - 2).
\end{equation}
From the previous relations we trivially get that four-tuple $(p_i, p_j,p_r, p_q)$ does not exist in this case.

\medskip

This way we actually prove that two cospectral integral circulant
graphs $\ICG_n(D_1)$ and $\ICG_n(D_2)$ must be isomorphic i.e.
$D_1=D_2$, for a square-free number $n$ and two-element divisor sets
$D_1$ and $D_2$ containing prime divisors. Therefore, we support
conjecture proposed by So \cite{so06}, that two graphs $\ICG_n(D_1)$
and $\ICG_n(D_2)$ are cospectral if and only if $D_1=D_2$. The
conjecture was only proven for the trivial cases where $n$ being
square-free and product of two primes. Our result is obviously one
form of generalization.

\section{Concluding remarks}

In this paper we focus on some global characteristics of the energy
of integral circulant graphs such as energy modulo four and
existence of non-cospectral graphs classes with equal energy. We
also find explicit formulas for the energy of $\ICG_n(D)$ classes
with two-element set $D$. In contrast to \cite{Il09}, the
calculation of these formulas require extensive discussion in many
different cases. Some further generalizations on this topic would
require much more case analysis. The examples of such
generalizations are calculating the energy of the graphs with three
or more divisors, graphs with square-free orders etc. The general
problem of calculating the energy of $\ICG_n(D)$ graphs seems very
difficult, since as we increase the number of divisors in $D$ we
have more sign changes in Ramanujan functions $c (n, i)$.

For the further research we also propose some new general characteristics of the energy such as
studying minimal and maximal energies for a given integral circulant graph, and characterizing the
extremal graphs. We will use the following nice result from \cite{GuFiPeRa07,ZhGuPeRaMe07}

\begin{thm}
Let G be a regular graph on $n$ vertices of degree $r > 0$. Then
$$
E (G) \geq n,
$$
with equality if and only if every component of $G$ is isomorphic to the complete bipartite graph
$K_{r,r}$.
\end{thm}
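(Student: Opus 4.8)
The plan is to derive the bound from two standard spectral facts about regular graphs together with one elementary pointwise inequality, and then to pin down the equality case. Let $\lambda_1\ge\lambda_2\ge\cdots\ge\lambda_n$ be the eigenvalues of $G$ and let $A$ be its adjacency matrix. Since $G$ is $r$-regular the all-ones vector is an eigenvector with eigenvalue $r$, and since the spectral radius of the adjacency matrix of any graph is bounded by its maximum degree, $|\lambda_i|\le r$ for all $i$; in particular $\lambda_1=r$. Also $\sum_{i=1}^n\lambda_i^2=\mathrm{tr}(A^2)=\sum_v\deg(v)=nr$. From $|\lambda_i|\le r$ we get $\lambda_i^2\le r\,|\lambda_i|$ for each $i$, and summing gives $nr=\sum_i\lambda_i^2\le r\sum_i|\lambda_i|=r\,E(G)$, whence $E(G)\ge n$.

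For the equality case, the summed inequality above is tight exactly when $\lambda_i^2=r|\lambda_i|$ for every $i$, i.e.\ when $\lambda_i\in\{-r,0,r\}$ for all $i$; this condition is visibly sufficient as well. Since both $E(\cdot)$ and the number of vertices are additive over connected components and every component of $G$ is itself $r$-regular, it suffices to prove that a connected $r$-regular graph has spectrum contained in $\{-r,0,r\}$ precisely when it is $K_{r,r}$. So assume $G$ is connected and $r$-regular with all eigenvalues in $\{-r,0,r\}$. By Perron--Frobenius the eigenvalue $r$ has multiplicity one, and then $\mathrm{tr}(A)=0$ forces $-r$ to occur exactly once too, so the spectrum is $\{r,\,0^{(n-2)},\,-r\}$; comparing $\mathrm{tr}(A^2)=2r^2$ with $nr$ gives $n=2r$. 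A connected graph has $-\lambda_1$ in its spectrum if and only if it is bipartite, so $G$ is bipartite with parts $X,Y$; $r$-regularity forces $|X|=|Y|=n/2=r$, and then each vertex of $X$ is adjacent to all $r$ vertices of $Y$, i.e.\ $G\cong K_{r,r}$. Conversely $K_{r,r}$ has eigenvalues $r,-r$ and $0$ with multiplicity $2r-2$, so $E(K_{r,r})=2r=n$, which settles the converse direction componentwise.

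The inequality itself is entirely routine once the bound $|\lambda_i|\le r$ is in hand; the only real content is the equality analysis, and within it the passage from "the spectrum is contained in $\{-r,0,r\}$" to "$G\cong K_{r,r}$". The key ingredients there are the simplicity of the Perron eigenvalue, the characterization of bipartiteness of a connected graph by the presence of $-\lambda_1$ in the spectrum, and the elementary observation that a connected bipartite regular graph has parts of equal size; assembling these into the clean statement is the main, though modest, obstacle.
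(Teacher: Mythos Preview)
Your proof is correct. Note, however, that the paper does not actually prove this theorem: it is quoted from \cite{GuFiPeRa07,ZhGuPeRaMe07}, and the paper only records a one-line sketch of the method used there, namely the moment inequality
\[
E(G)\ \ge\ \frac{M_2^2}{\sqrt{M_2 M_4}},
\]
together with the combinatorial expression $M_4=8q-2m+2\sum_v\deg(v)^2$. For an $r$-regular graph one then bounds $M_4\le r^2 M_2=nr^3$ and obtains $E(G)\ge (nr)^{3/2}/(nr^3)^{1/2}=n$; the equality analysis in those references goes through the same constraint on $M_4$.

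Your route is genuinely different and more elementary: you bypass the moment inequality entirely by using only $|\lambda_i|\le r$ to get $\lambda_i^2\le r|\lambda_i|$ pointwise, and then sum. This avoids the two Cauchy--Schwarz steps hidden in $E(G)\ge M_2^2/\sqrt{M_2 M_4}$ and the quadrangle count in $M_4$, at the cost of being specific to regular graphs (whereas the moment inequality holds for arbitrary graphs). Your equality analysis via Perron--Frobenius, the bipartiteness criterion $-\lambda_1\in\mathrm{Spec}(G)$, and the size count $n=2r$ is clean and self-contained; the cited papers reach the same endpoint through the equality case of the moment bound.
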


The proof is based on the estimation
$$
E (G) \geq \frac{M_2^2}{\sqrt{M_2 M_4}},
$$
where $M_2 = 2m$ and $M_4$ are spectral moments of graph $G$, defined as
$$
M_k = \sum_{i = 1}^n \lambda_i^k.
$$
The fourth moment is equal to $M_4 = 8q - 2m + 2 \sum_{v \in V} deg^2 (v)$, where $q$ is the number
of quadrangles in~$G$.

Let $n$ be even number and assume that $\ICG_n (D^*)$ is isomorphic to $K_{n/2, n/2}$. The present
authors in \cite{BaIl09} proved the following

\begin{thm}
\label{d components} Let $d_1, d_2, \ldots, d_k$ be divisors of $n$ such that the greatest common
divisor $\gcd (d_1, d_2, \ldots, d_k)$ equals $d$. Then the graph $\ICG_n (d_1, d_2, \ldots, d_k)$
has exactly $d$ connected components isomorphic to $\ICG_{n / d} (\frac{d_1}{d}, \frac{d_2}{d},
\ldots, \frac{d_k}{d})$.
\end{thm}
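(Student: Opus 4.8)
The plan is to use the Cayley-graph description of $G:=\ICG_n(d_1,\ldots,d_k)$: by Theorem~\ref{so} it is the Cayley graph of the additive group $Z_n$ with connection set $S=\bigcup_{i=1}^k G_n(d_i)$, so that two vertices $a,b$ are adjacent exactly when $\gcd(a-b,n)\in\{d_1,\ldots,d_k\}$. The first observation is that $S\subseteq dZ_n:=\{0,d,2d,\ldots,n-d\}$: indeed $d=\gcd(d_1,\ldots,d_k)$ divides every $d_i$, and any $k\in G_n(d_i)$ satisfies $d_i\mid k$, hence $d\mid k$. Therefore an edge can never join vertices lying in different cosets of the subgroup $dZ_n$, so the vertex set $Z_n$ breaks up into the $d$ cosets $C_r:=r+dZ_n$ ($r=0,1,\ldots,d-1$), each of size $n/d$, and every connected component of $G$ is contained in a single $C_r$.

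Next I would identify the subgraph induced on each $C_r$ with $H:=\ICG_{n/d}(d_1/d,\ldots,d_k/d)$; note that each $d_i/d$ is indeed a divisor of $n/d$ (and $<n/d$, recalling $d_i<n$). Define $\phi_r\colon C_r\to Z_{n/d}$ by $\phi_r(r+jd)=j$. Using the elementary identity
\[
\gcd\bigl((j-j')d,\,n\bigr)=d\cdot\gcd\bigl(j-j',\,n/d\bigr),
\]
the vertices $r+jd$ and $r+j'd$ are adjacent in $G$ iff $d\cdot\gcd(j-j',n/d)\in\{d_1,\ldots,d_k\}$, i.e. iff $\gcd(j-j',n/d)\in\{d_1/d,\ldots,d_k/d\}$, which is precisely adjacency in $H$. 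Hence each $\phi_r$ is a graph isomorphism from $G[C_r]$ onto $H$.

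It remains to check that each $C_r$ is a single connected component, i.e. that $H$ is connected; by the connectivity criterion (Corollary~4.2 in \cite{hwang03}, quoted in Section~2) this holds because $\gcd(d_1/d,\ldots,d_k/d)=\gcd(d_1,\ldots,d_k)/d=1$. Combining the three steps, $C_0,\ldots,C_{d-1}$ are exactly the connected components of $G$, and each is isomorphic to $\ICG_{n/d}(d_1/d,\ldots,d_k/d)$, which is the claim. (Equivalently, one may note that since each $d_i$ itself lies in $S$, the set $S$ generates the subgroup $dZ_n\le Z_n$, whence $G$ has $[Z_n:dZ_n]=d$ components.)

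The argument is essentially bookkeeping; the one point that genuinely needs attention is verifying that the contracted divisor set $\{d_i/d\}$ has overall gcd $1$, so that the cosets do not split any further — this is where the hypothesis $\gcd(d_1,\ldots,d_k)=d$ (and not merely $d\mid\gcd(d_1,\ldots,d_k)$) is used. I expect that, rather than any of the gcd manipulations, to be the only real content of the proof.
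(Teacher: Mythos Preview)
Your argument is correct. Note, however, that the paper does not actually prove this theorem: it is quoted (without proof) from the authors' earlier work~\cite{BaIl09}, so there is no in-paper proof to compare against. Your route --- observing $S\subseteq dZ_n$, transporting each coset $r+dZ_n$ to $\ICG_{n/d}(d_1/d,\ldots,d_k/d)$ via the identity $\gcd((j-j')d,n)=d\cdot\gcd(j-j',n/d)$, and then invoking the connectivity criterion from~\cite{hwang03} with $\gcd(d_1/d,\ldots,d_k/d)=1$ --- is the natural Cayley-graph argument and is almost certainly what appears in~\cite{BaIl09}.
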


In this case the complement of $\ICG_n (D^*)$, denoted by $\ICG_n (\overline{D})$, must contain
exactly two connected components that are cliques, and for $\overline{D} = \{ d_1, d_2, \ldots, d_k
\}$ we have $\gcd (d_1, d_2, \ldots, d_k) = 2$ and $\ICG_{n / 2} (\frac{d_1}{2}, \frac{d_2}{2},
\ldots, \frac{d_k}{2})$ is isomorphic to a complete graph $K_{n/2}$. It simply follows that the set
$\overline{D}$ must contain all even divisors of $n$ and therefore $D^*$ is the set of all odd
divisors of $n$. Therefore, the degree of $\ICG_n (D^*)$ is equal to $\frac{n}{2} = \sum_{d \in
D^*} \varphi (\frac{n}{d})$ and $\ICG_n (D^*)$ is isomorphic to a complete bipartite graph $K_{n/2,
n/2}$. Recall that the spectra of the complete bipartite graph $K_{m, n}$ consists of $\sqrt{mn}$,
$-\sqrt{mn}$ and $0$ with multiplicity $n - 2$. It follows that $|\lambda_{n / 2}| = |\lambda_0| =
\frac{n}{2}$ and for $k \neq 0, \frac{n}{2}$ we have the following nice identity
\begin{eqnarray*}
\lambda_k &=& \sum_{d \mid n, \ d \ odd} c \left(k, d\right) \\
&=& \sum_{d \mid n, \ d \ odd} \mu \left ( \frac{d}{\gcd (k, d)} \right ) \cdot \frac{ \varphi (d)
} {\varphi \left (\frac{d}{\gcd (k, d)} \right )} \\
&=& 0.
\end{eqnarray*}

Using computer search, for odd $n$ the minimum is $2 n (1 - \frac{1}{p})$, where $p$ is the
smallest prime dividing $n$. The extremal integral circulant graph contains all divisors of $n$
that are not divisible by $p$ (and the complement of such graph is composed of $p$ cliques). We
leave this observation as a conjecture.

\vspace{0.5cm} {\bf Acknowledgement. } The authors gratefully
acknowledge support from Research projects 174010 and 174033 of the
Serbian Ministry of Science.

\end{document}